\DeclareMathOperator{\var}{Var}
\DeclareMathOperator{\He}{He}
\newcommand{\E}{\mathbb{E}}
\renewcommand{\P}{\mathbb{P}}
\newcommand{\eps}{\varepsilon}
\newcommand{\R}{\mathbb{R}}
\newcommand{\N}{\mathbb{N}}
\newcommand{\Z}{\mathbb{Z}}
\def\R{\mathbb{R}}
\newcommand{\dd}{\mathrm{d}}
\theoremstyle{plain}
\newtheorem{theorem}{Theorem}[section]
\newtheorem{lemma}[theorem]{Lemma}
\newtheorem{corollary}[theorem]{Corollary}
\newtheorem{proposition}[theorem]{Proposition}
\theoremstyle{definition}
\newtheorem{definition}[theorem]{Definition}
\newtheorem{example}[theorem]{Example}
\theoremstyle{remark}
\newtheorem{remark}[theorem]{Remark}
\numberwithin{equation}{section}
\begin{document}
	
	\title[Random polynomials and finite free cumulants]{Critical points of random polynomials and finite free cumulants}
	
		\author{Octavio Arizmendi }
		\address{Centro de Investigacion en Matem\'aticas. A.C., Jalisco S/N, Col. Valenciana CP: 36023 Guanajuato, Gto, Mexico }
		\email{octavius@cimat.mx}
		\thanks{O. Arizmendi was supported by the grant Conahcyt A1-S-9764}
	
		\author{Andrew Campbell}
		\address{Institute of Science and Technology Austria, Am Campus 1, 3400 Klosterneuburg, Austria}
		\email{andrew.campbell@ist.ac.at}
		\thanks{A. Campbell is supported by Austrian Science Fund (FWF) under
the Esprit Programme grant ESP4314224.}
		
		\author{Katsunori Fujie}
		\address{Department of Mathematics, Kyoto University,
        Kitashirakawa, Oiwake-cho, Sakyo-ku, Kyoto, 606-8502, Japan}
		\email{email: fujie.katsunori.42m@st.kyoto-u.ac.jp}
		\thanks{K. Fujie was supported by JSPS Open Partnership Joint Research Projects Grant Number JPJSBP120209921 and JSPS Research Fellowship for Young Scientists PD (KAKENHI Grant Number 24KJ1318).}
	
	\begin{abstract}
		A result of Hoskins and Steinerberger [\emph{Int. Math. Res. Not.}, (13):9784–9809, 2022] states that repeatedly differentiating a sequence of random polynomials with independent and identically distributed mean zero and variance one roots will result, after an appropriate rescaling, in a Hermite polynomial. We use the theory of finite free probability to extend this result in two natural directions: (1) We prove central limit theorems for the fluctuations around these deterministic limits for the polynomials and their roots. (2) We consider a generalized version of the Hoskins and Steinerberger result by removing the finite second moment assumption from the roots. In this case the Hermite polynomials are replaced by a random Appell sequence conveniently described through finite free probability and an infinitely divisible distribution.
		We use finite free cumulants to provide compact proofs of our main results with little prerequisite knowledge of free probability required. 
	\end{abstract}

	\maketitle 
	

	\section{Introduction} \label{sec:intro}
    Our goal is to extend existing results on the convergence of roots of derivatives of random polynomials from the finite free probability perspective. We aim to apply the theory of \emph{finite free cumulants}, first developed by \cite{Arizmendi-Perales2018} to linearize the \emph{finite free additive convolution} $\boxplus_{N}$, to study the critical points of random polynomials of the form \begin{equation}\label{eq:random polynomial definition}
		p_N(x)=\prod_{j=1}^{N}\left(x-X_{j}\right),
	\end{equation} where $X_{1},X_{2},\dots$ are independent copies of a random variable $X$ distributed according to some probability measure $\mu$. We are specifically interested in the roots of the polynomial \begin{equation}\label{eq:diff op def}
	\partial_{k|N}p_N:=\frac{(N-k)!}{N!}p^{(N-k)}_{N},
\end{equation} as $N\rightarrow\infty$, where $p^{(N-k)}_{N}$ is the $(N-k)$-th derivative of $p_N$. When $\mu$ is supported on the complex plane and $N-k$ is small, e.g.\ finite or growing very slowly in $N$, there are several papers \cite{Kabluchko-Seidel2019,ORourke-Williams2019,ORourke-Williams2020,Michelen-Vu2024,Michelen-Vu2022,Pemantle-Rivin2013,Hanin2015Gauss,Hanin2017} showing that the roots of $\partial_{k|N}p_N$ are very close to the roots of $p_N$. For measures supported on the real line, significantly larger ranges of $N-k$ can be described \cite{Hoskins-Kabluchko2021,Arizmendi-GarzaVargas-Perales2023,Campbell-ORourke-Renfrew2024even,campbell2025freeinfinitedivisibilityfractional,Gorin-Klepttsyn2020universal,Hoskins-Steinerberger2022,Arizmendi-Fujie-Perales-Ueda2024,Gorin-Marcus2020}. We will consider the situation when $k$ is finite. It can be seen from interlacing that the roots will naturally contract towards their mean under repeated differentiation, so we will often consider the operator $\mathcal{D}_{a}$ on polynomials defined  by \begin{equation}
\left[\mathcal{D}_{a}p\right](x)=a^{\deg(p)}p\left(\frac{x}{a}\right),
\end{equation} which dilates the roots by $a>0$. In the finite $k$ regime, Hoskins and Steinerberger proved the following.  \begin{proposition}[Hoskins and Steinerberger \cite{Hoskins-Steinerberger2022}]\label{thm:HS}
Let $p_N$ be as in \eqref{eq:random polynomial definition} and assume the measure $\mu$ is supported on the real line, has mean $0$, variance $1$, and moments of all orders. If \begin{equation}\label{eq:shifted der}
	\tilde{p}_{k,N}(x)=\mathcal{D}_{\sqrt{N}}\partial_{k|N}p_{N}\left(x+\frac{1}{N}\sum_{j=1}^{N}X_{j}\right),
\end{equation} then, for any fixed $k\in\N$\begin{equation}\label{eq:polynomial limit}
	\lim\limits_{N\rightarrow\infty}\tilde{p}_{k,N}\left(x\right)=\He_{k}(x),
\end{equation} pointwise, almost surely, where $\He_{k}$ is the degree $k$ Hermite polynomial\begin{equation}\label{eq:Hermite def}
\He_k(x)=\sum_{j=0}^{\lfloor \frac{k}{2}\rfloor}\frac{k!(-1)^{j}}{j!(k-2j)!}\frac{x^{k-2j}}{2^{j}}.
\end{equation} 
\end{proposition}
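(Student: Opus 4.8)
The plan is to work in the Fourier (or rather, the finite free cumulant) domain rather than directly with polynomials. First I would observe that differentiation has a clean description in finite free probability: if $p_N$ has degree $N$, then $\partial_{k|N}p_N$ is, up to normalization, the finite free convolution of $p_N$ with $x^{N-k}$ in a suitable sense, and more usefully, repeated differentiation corresponds to a simple transformation on the finite free cumulants. Concretely, if $\kappa_j^{(N)}(p_N)$ denote the degree-$N$ finite free cumulants of $p_N$, then the degree-$k$ polynomial $\partial_{k|N}p_N$ has finite free cumulants $\kappa_j^{(k)}$ that are obtained from those of $p_N$ by a rescaling that, in the finite $k$ regime, amounts to $\kappa_j^{(k)}(\partial_{k|N}p_N)\approx \kappa_j^{(N)}(p_N)$ for each fixed $j\le k$ as $N\to\infty$. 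The shift by $\tfrac1N\sum X_j$ kills the first cumulant, and the dilation $\mathcal{D}_{\sqrt N}$ rescales the $j$-th cumulant by $N^{j/2}$.

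The key computation is then to control the finite free cumulants of the \emph{random} polynomial $p_N$ in \eqref{eq:random polynomial definition}. Since $p_N=\prod_{j=1}^N(x-X_j)$, its coefficients are (up to sign) the elementary symmetric polynomials in the $X_j$, and its finite free cumulants are explicit polynomial expressions in the power sums $\tfrac1N\sum_j X_j^m$. By the strong law of large numbers, $\tfrac1N\sum_j X_j^m\to \E[X^m]=m_m(\mu)$ almost surely for each fixed $m$. Feeding this into the cumulant formulas, one gets that the (appropriately scaled) finite free cumulants of $\tilde p_{k,N}$ converge almost surely: the first cumulant goes to $0$ (by the centering), the second goes to $\var(X)=1$, and every higher cumulant, carrying a net negative power of $N$ after the $N^{j/2}$ dilation is balanced against the combinatorial factors, tends to $0$. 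Thus in the limit the degree-$k$ polynomial has only its second finite free cumulant nonzero and equal to $1$; I would then identify this object. The degree-$k$ polynomial whose only nonvanishing finite free cumulant is $\kappa_2=1$ is precisely the Hermite polynomial $\He_k$ — this is the finite free analogue of the semicircle/Gaussian being characterized by a single nonzero free/classical cumulant, and it can be checked directly against \eqref{eq:Hermite def} by computing the finite free cumulants of $\He_k$ from its coefficients.

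The remaining step is to upgrade convergence of coefficients (equivalently, of the finite list of finite free cumulants $\kappa_1,\dots,\kappa_k$) to pointwise almost sure convergence of the polynomials, which is immediate since a degree-$k$ polynomial depends continuously (indeed polynomially) on its finitely many coefficients, and almost sure convergence of each coefficient on a single probability-one event follows from taking a countable intersection over $m=1,\dots,k$ of the full-measure events on which the power sums converge. I would also need the elementary fact that the map between the coefficients of $\partial_{k|N}p_N$ and those of $p_N$ is exactly the classical one (divided differences / derivative formula), so no analytic subtlety enters.

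The main obstacle I expect is \emph{bookkeeping the scaling exponents}: one must verify that after applying $\mathcal D_{\sqrt N}$ the $j$-th finite free cumulant of $\partial_{k|N}p_N$ really does pick up exactly $N^{j/2}$ and that the expression for that cumulant in terms of the power sums $\tfrac1N\sum X_j^m$ has the matching $N^{-j/2+\epsilon}$ decay for $j\ge 3$, uniformly enough to pass to the limit — equivalently, that the finite $k$, large $N$ asymptotics of the rescaling operation on finite free cumulants are benign. The algebraic identity $\kappa_j(\He_k)=\delta_{j2}$ is the conceptual heart, and once the scaling is pinned down the rest is the strong law of large numbers plus continuity.
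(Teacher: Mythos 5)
Your overall strategy is exactly the cumulant route this paper is built on (the paper itself only cites Hoskins--Steinerberger for Proposition \ref{thm:HS}, but its proofs of Theorem \ref{thm:finite free cumulant fluc} and Theorem \ref{thm:ID limiting polynomial} run precisely your argument: cumulants of the rescaled derivative converge, and the map from finite free cumulants to coefficients is continuous, with Lemma \ref{lem:finite to full cumulant comp} plus the strong law giving $\kappa_{j}^{N}(p_N)\to\kappa_{j}(\mu)$ almost surely). However, the two quantitative claims at the heart of your write-up are both wrong as stated, and your proof only reaches $\He_k$ because the two errors cancel. First, the rescaling rule under $\partial_{k|N}$ is not $\kappa_{j}^{k}(\partial_{k|N}p_N)\approx\kappa_{j}^{N}(p_N)$; the exact identity (Lemma \ref{lem:cumulant-der formula}) is $\kappa_{j}^{k}(\partial_{k|N}p_N)=(k/N)^{j-1}\kappa_{j}^{N}(p_N)$, so for fixed $j\ge 2$ these cumulants tend to $0$ before dilation. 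Combined with the correct observation that $\mathcal{D}_{\sqrt N}$ multiplies $\kappa_j$ by $N^{j/2}$, one gets
\begin{equation*}
\kappa_{j}^{k}\left(\tilde{p}_{k,N}\right)=k^{j-1}N^{1-\frac{j}{2}}\,\kappa_{j}^{N}\!\left(p_N\left(\cdot+\tfrac1N\textstyle\sum_j X_j\right)\right),
\end{equation*}
so that almost surely $\kappa_{2}^{k}(\tilde{p}_{k,N})\to k$ (not $1$) and $\kappa_{j}^{k}(\tilde{p}_{k,N})\to 0$ for $j\ge 3$; note that with your stated rule plus the $N^{j/2}$ dilation factor the second cumulant would instead diverge like $N$, so the argument as written does not close.

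Second, the identification step is also misstated: in the normalization in which your other computations live (the one of \eqref{eq:finite cumulant def}, where $\kappa_2^N$ approximates the root variance), the Hermite polynomial \eqref{eq:Hermite def} satisfies $\kappa_{j}^{k}(\He_k)=k\,\delta_{j2}$, not $\delta_{j2}$; indeed the empirical root distribution of $\He_k$ has variance $k-1$, and its finite $R$-transform is $R(s)=ks$. The degree-$k$ monic polynomial with $\kappa_2=1$ and all other cumulants zero is a dilated Hermite polynomial, not $\He_k$. Once you replace your approximate rule by the exact identity above and the identity $\kappa_{j}^{k}(\He_k)=k\delta_{j2}$, the limiting cumulant vector is $(0,k,0,\dots,0)$, it matches $\He_k$, and the rest of your argument (almost sure convergence of moments by the law of large numbers, moments to finite free cumulants via Lemma \ref{lem:finite cumulant leading order}, and continuity of cumulants $\mapsto$ coefficients on $\mathrm{P}_k(\R)$) is sound and coincides with the paper's framework.
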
  In \cite{Campbell-ORourke-Renfrew2024even},  it was pointed out that the moment assumptions on $\mu$ can be relaxed to just two finite moments.
Our main contribution is to extend this convergence of random polynomials with independent roots in two directions:\begin{enumerate}
\item First, we prove central limit theorems (CLTs) for the polynomial $\tilde{p}_{k,N}$ and its roots around their deterministic limits given in Proposition \ref{thm:HS}. We prove that these fluctuations are Gaussian, and the variance depends only on $\mu$ through its fourth moment. Notably, the variance for the polynomial CLT depends only on $x$ through a lower degree Hermite polynomial. See Theorems \ref{thm:critical point CLT} and \ref{thm:polynomial CLT} for detailed statements. 

\item Second, we prove a generalized version of Proposition \ref{thm:HS} for a sequences of random polynomials $q_{N}$ whose roots may not be in the domain of attraction of the Gaussian distribution, but rather some possibly different infinitely divisible distribution. The Hermite polynomials  in \eqref{eq:polynomial limit} will need to be replaced by a random \emph{Appell sequence} depending only on the infinitely divisible distribution. This result applies to random polynomials of the form \eqref{eq:random polynomial definition} when $\mu$ fails to have two finite moments (giving an analogue for stable distributions) and random polynomials with a very small number of non-zero roots (giving an analogue for the Poisson limit theorem). See Theorem, \ref{thm:ID limiting polynomial} below.
\end{enumerate} In both results, we use the finite free cumulants as an efficient way of tracking how fluctuations of the original iid roots propagate through repeated differentiation.
We limit our study here to iid roots, but our techniques should be applicable to any sequence of random real-rooted polynomials where the statistics of the original roots are well understood, e.g.\ characteristic polynomials of random matrices.

\subsection{Notation and a short introduction to finite free probability} Finite free probability arose from Marcus, Spielman, and Srivastava's celebrated work \cite{Marcus-Spielman-Srivastava2015-1,Marcus-Spielman-Srivastava2015-2,Marcus-Spielman-Srivastava2022} on interlacing families. As our goal is to demonstrate how finite free cumulants provide an efficient and natural way of computing limits of the form \eqref{eq:polynomial limit}, we will only provide basic definitions required for the proofs. We encourage the interested reader to see \cite{Marcus-Spielman-Srivastava2022,Marcus2021,Arizmendi-Perales2018} for a more thorough introduction. 

We will use $\Rightarrow$ to denote convergence in distribution and $\mathcal{N}(a,\sigma^2)$ to denote the Gaussian distribution with mean (mean vector) $a$ and variance (covariance matrix) $\sigma^2$. Let $\mathrm{P}_{\ell}(\R)$ be the set of degree at most $\ell$ polynomials with real coefficients.
We note that pointwise convergence, uniform convergence on compact subsets, and convergence of the coefficients as vectors in $\R^{\ell+1}$ are all equivalent on $\mathrm{P}_{\ell}(\R)$.
Thus, we will collectively refer to all of these as \emph{convergence in $\mathrm{P}_{\ell}(\R)$}, and convergence in distribution of random polynomials with respect to these topologies as \emph{convergence in distribution in $\mathrm{P}_{\ell}(\R)$}.   

For natural numbers $N$ and $k$ we shall use $(N)_k=N(N-1)\cdots(N-k+1)$ to denote the falling factorial. A partition, $\pi = \{ V_1, \ldots, V_r \}$ of $[j] := \{ 1,\ldots, j\}$ is a collection of pairwise disjoint, non-empty, sets $V_i$ such that $\cup_{i=1}^{r} V_i = [j]$. We refer to $V_i$ as a block of $\pi$, and denote the number of blocks of $\pi$ as $|\pi|$. The set of all partitions of $[j]$ is denoted by $\mathcal{P}(j)$.
Additionally, the set of \emph{non-crossing} partitions, see for example \cite{Mingo-Speicher2017} for a definition, will be denoted by $\mathcal{NC}(j)$. We write $1_j = \{\{1,2, \ldots, j \}\}$ to denote the maximal element of $\mathcal{P}(j)$ with respect to reverse refinement.  For a partition $\pi =  \{ V_1, \ldots, V_r \}$ and a sequence of numbers $\{c_n\}_{n=1}^\infty$, we use the notation
\begin{equation} \label{eq:cpinotation}
	c_{\pi}:= \prod_{i=1}^{r}  c_{|V_i|}.
\end{equation} 

For degree $N$ monic polynomials $p(x)=x^{N}+\sum_{k=1}^{N}(-1)^{k}a_{k}x^{N-k}$ and $q(x)=x^{N}+\sum_{k=1}^{N}(-1)^kb_{k}x^{N-k}$ the  \emph{finite free additive convolution} $p\boxplus_{N}q$ of $p$ and $q$ is defined by \begin{equation*}
	p\boxplus_{N}q(x):=x^{N}+\sum_{k=1}^{N}(-1)^{k}\sum_{i+j=k}\frac{(N-i)!(N-j)!}{N!(N-k)!}a_{i}b_{j}.
\end{equation*}
For a degree $N$ polynomial $p$ with roots $\lambda_{1},\dots, \lambda_{N}$, the $j$-th moment of $p$, denoted $m_{j}(p)$, is the $j$-th moment of its empirical root measure
\begin{equation*}
m_{j}(p)=\frac{1}{N}\sum_{k=1}^{N} \lambda_{k}^{j}.
\end{equation*}
For a degree $N$ polynomial $p(x)=x^{N}+\sum_{k=1}^{N}(-1)^{k}a_{k}x^{N-k}$, the finite free cumulants $\kappa_{1}^{N}(p),\dots,\kappa_{N}^{N}(p)$ can be defined implicitly via \begin{equation}\label{eq:finite cumulant def}
a_{k}=\frac{(N)_{k}}{N^{k}k!}\sum_{\pi\in\mathcal{P}(k)}(-1)^{k-|\pi|}N^{|\pi|}\left[\prod_{V\in\pi}(|V|-1)!\right]\kappa_{\pi}^{N}(p).
\end{equation} It is worth noting that $\kappa_{j}^{\ell}(\He_{\ell})=\ell\delta_{j2}$,  for $1\leq j\leq \ell$. Our main results rely primarily on the following results on finite free cumulants \begin{lemma}[Proposition 3.4 in\cite{Arizmendi-Fujie-Perales-Ueda2024}]\label{lem:cumulant-der formula}
Let $p$ be a degree $N$ polynomial. Then, for any $1\leq j\leq k\leq N$ \begin{equation}
	\kappa_{j}^{k}(\partial_{k|N}p)=\left(\frac{k}{N}\right)^{j-1}\kappa_{j}^{N}(p).
\end{equation}
\end{lemma}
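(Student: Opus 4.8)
The plan is to recast the defining relation \eqref{eq:finite cumulant def} as a generating-function identity under which the operator $\partial_{k|N}$ acts simply by dilating the variable, and then to read off the cumulant scaling by comparing coefficients after taking logarithms. As a first step I would compute how the coefficients change under differentiation: writing $p(x)=x^{N}+\sum_{i=1}^{N}(-1)^{i}a_{i}x^{N-i}$ (so $a_{0}=1$) and $\partial_{k|N}p(x)=x^{k}+\sum_{i=1}^{k}(-1)^{i}a_{i}^{(k)}x^{k-i}$, differentiating the monomials $x^{N-i}$ exactly $N-k$ times and normalizing to keep the polynomial monic gives $a_{i}^{(k)}=\tfrac{(k)_{i}}{(N)_{i}}a_{i}$ for $0\le i\le k$.

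The heart of the argument is a reformulation of \eqref{eq:finite cumulant def}. For a monic degree $M$ polynomial $q(x)=x^{M}+\sum_{i=1}^{M}(-1)^{i}b_{i}x^{M-i}$, set $\Phi_{q}(z):=\sum_{i=0}^{M}\tfrac{M^{i}}{(M)_{i}}b_{i}z^{i}$. Using $(-1)^{i-|\pi|}=\prod_{V\in\pi}(-1)^{|V|-1}$ and $M^{|\pi|}=\prod_{V\in\pi}M$, relation \eqref{eq:finite cumulant def} says precisely that $\tfrac{M^{i}\,i!}{(M)_{i}}b_{i}=\sum_{\pi\in\mathcal{P}(i)}\prod_{V\in\pi}g_{|V|}$ with $g_{m}:=(-1)^{m-1}M\,(m-1)!\,\kappa_{m}^{M}(q)$. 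By the exponential formula for set partitions this is equivalent, modulo $z^{M+1}$ (which is all that is needed, since $\Phi_{q}$ is a polynomial of degree $M$), to
\[
\log\Phi_{q}(z)=M\sum_{j\ge 1}\frac{(-1)^{j-1}}{j}\,\kappa_{j}^{M}(q)\,z^{j}.
\]

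With these two ingredients the conclusion follows quickly. Combining the coefficient formula of the first step with the definition of $\Phi$ yields
\[
\Phi_{\partial_{k|N}p}(z)=\sum_{i=0}^{k}\frac{k^{i}}{(k)_{i}}\,a_{i}^{(k)}z^{i}=\sum_{i=0}^{k}\frac{k^{i}}{(N)_{i}}\,a_{i}z^{i}=\Phi_{p}\!\Bigl(\tfrac{k}{N}z\Bigr)\pmod{z^{k+1}}.
\]
Applying $\log$ to both sides (using $M=k$, $q=\partial_{k|N}p$ on the left and $M=N$, $q=p$ on the right) and comparing the coefficient of $z^{j}$ for $1\le j\le k$ gives $k\,\kappa_{j}^{k}(\partial_{k|N}p)=N\,(k/N)^{j}\,\kappa_{j}^{N}(p)$, i.e. $\kappa_{j}^{k}(\partial_{k|N}p)=(k/N)^{j-1}\kappa_{j}^{N}(p)$.

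I expect the only real obstacle to be the bookkeeping in the middle step: one must carefully line up the falling factorials $(N)_{k}$, the powers of $M$, the signs $(-1)^{|V|-1}$, and the factorials $(|V|-1)!$ in \eqref{eq:finite cumulant def} against the exponential formula, and keep track of the harmless truncation modulo $z^{M+1}$ when passing to logarithms. A less slick alternative would be to prove the single-derivative case $k=N-1$ directly and then iterate using $\partial_{k|N}=\partial_{k|k+1}\circ\cdots\circ\partial_{N-1|N}$ together with the multiplicativity of $(k/N)^{j-1}$ in $k$; the generating-function argument has the advantage of treating all $k$ at once.
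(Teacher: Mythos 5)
Your argument is correct, and a point of context: the paper itself gives no proof of this lemma---it is imported verbatim as Proposition 3.4 of Arizmendi--Fujie--Perales--Ueda---so there is no internal proof to compare against; judged on its own terms, your proof stands. The coefficient formula $a_i^{(k)}=\frac{(k)_i}{(N)_i}a_i$ is right, and you are correct to pass to the monic representative (with the normalization \eqref{eq:diff op def} the leading coefficient of $\partial_{k|N}p$ is $(N-k)!/k!$, which is immaterial since the cumulants depend only on the roots); your exponential-formula inversion of \eqref{eq:finite cumulant def} into $\log\Phi_q(z)=M\sum_{j\ge 1}\frac{(-1)^{j-1}}{j}\kappa_j^M(q)z^j$ modulo $z^{M+1}$ is a valid restatement of the partition sum (the triangularity of \eqref{eq:finite cumulant def} guarantees the cumulants are determined, so the equivalence is legitimate), and the dilation identity $\Phi_{\partial_{k|N}p}(z)=\Phi_p(kz/N)$ modulo $z^{k+1}$ plus comparison of $z^j$-coefficients yields exactly the claimed scaling.

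It is worth noticing that your $\Phi_p(-s)$ is precisely $P(Ns)$ in the notation of Definition \ref{def: R def}, so your middle step amounts to re-deriving from \eqref{eq:finite cumulant def} the second assertion of Lemma \ref{lem:R transform and Fourier lemma}, namely that the coefficients of $R_p(s)=-P'(Ns)/P(Ns)$ are the finite free cumulants. If you take that lemma as given, the whole proof collapses: since $p=P\left(\frac{\dd}{\dd x}\right)x^N$ with $P(0)=1$ and derivatives commute, the monic representative of $\partial_{k|N}p$ is $P\left(\frac{\dd}{\dd x}\right)x^k$, hence $R_{\partial_{k|N}p}(s)=-P'(ks)/P(ks)\bmod s^{k}$, and reading off the coefficient of $s^{j-1}$ against $R_p(s)=-P'(Ns)/P(Ns)$ gives $\kappa_j^k(\partial_{k|N}p)=k^{j-1}N^{1-j}\kappa_j^N(p)$ for $1\le j\le k$. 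So your route is sound and self-contained, at the cost of reproving part of the paper's stated toolkit; the iteration-over-single-derivatives alternative you mention would also work but is indeed clumsier.
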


\begin{lemma}[See the proof of Theorem 5.4 in\cite{Arizmendi-Perales2018}]\label{lem:finite cumulant leading order}
	Let $p$ be a degree $N$ polynomial. Then, for any $1\leq j\leq N$ \begin{equation}\label{eq:leading order of cumulants}
		\kappa_{j}^{N}(p)=\frac{N^{j}}{(N)_{j}}\left[m_{j}(p)-\sum_{\substack{\sigma\in\mathcal{NC}(j)\\
				\sigma\neq 1_{j}}}\frac{Q_{\sigma}(N)}{N^{j+1-|\sigma|}}\kappa_{\sigma}^{N}(p)\right],
	\end{equation} where each $Q_{\sigma}(N)$ is a monic degree $j+1-|\sigma|$ polynomial  in $N$.
\end{lemma}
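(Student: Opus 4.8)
The plan is to obtain \eqref{eq:leading order of cumulants} by solving the finite free moment--cumulant formula for its ``top'' term. So I would first establish
\begin{equation}\label{eq:mc-plan}
	m_j(p)=\sum_{\sigma\in\mathcal{NC}(j)}\left(\prod_{V\in\sigma}\frac{(N)_{|V|}}{N^{|V|}}\right)\kappa_\sigma^N(p),
\end{equation}
which is, up to notation, the finite free moment--cumulant identity implicit in \cite{Arizmendi-Perales2018} (cf.\ the proof of their Theorem~5.4). Granting \eqref{eq:mc-plan}, the lemma is immediate: the coefficient of $\kappa_\sigma^N(p)$ there is $N^{|\sigma|-j}\prod_{V\in\sigma}(N)_{|V|}$, which we rewrite as $Q_\sigma(N)/N^{\,j+1-|\sigma|}$ with $Q_\sigma(N):=N^{\,1-|\sigma|}\prod_{V\in\sigma}(N)_{|V|}$; since each factor $(N)_{|V|}$ is divisible by exactly one power of $N$, the quantity $Q_\sigma$ is a genuine polynomial, and being the quotient of the monic degree-$j$ polynomial $\prod_{V\in\sigma}(N)_{|V|}$ by $N^{|\sigma|-1}$ it is monic of degree $j+1-|\sigma|$. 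For $\sigma=1_j$ the coefficient equals $(N)_j/N^j$ and $\kappa_{1_j}^N(p)=\kappa_j^N(p)$, so isolating that term of \eqref{eq:mc-plan} and dividing through by $(N)_j/N^j$ produces exactly \eqref{eq:leading order of cumulants}.

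To prove \eqref{eq:mc-plan} I would reconcile two descriptions of the coefficients $a_k$ of $p$. On the cumulant side, grouping the partition sum in \eqref{eq:finite cumulant def} by blocks and applying the exponential formula gives
\begin{equation}\label{eq:exp-plan}
	\sum_{k\ge0}\frac{N^k}{(N)_k}\,a_k\,t^k=\exp\!\left(N\sum_{n\ge1}\frac{(-1)^{n-1}}{n}\,\kappa_n^N(p)\,t^n\right),
\end{equation}
using $(-1)^{k-|\pi|}N^{|\pi|}\prod_{V\in\pi}(|V|-1)!\,\kappa_{|V|}^N(p)=\prod_{V\in\pi}\bigl[(-1)^{|V|-1}N(|V|-1)!\,\kappa_{|V|}^N(p)\bigr]$. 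On the moment side, $\prod_{i=1}^N(1-\lambda_i z)=\sum_{k\ge0}(-1)^k a_k z^k$ together with $\log\prod_{i=1}^N(1-\lambda_i z)=-N\sum_{j\ge1}m_j(p)z^j/j$ expresses $a_k$ through the moments $m_j(p)$. The cleanest way to reconcile the two is to show that the rescaled cumulant series $\tilde R(w):=\sum_{k\ge1}\tfrac{(N)_k}{N^k}\kappa_k^N(p)\,w^{k-1}$ and the moment series $M(z):=1+\sum_{n\ge1}m_n(p)z^n$ satisfy the classical free self-consistency relation $M(z)=1+zM(z)\,\tilde R\bigl(zM(z)\bigr)$; iterating it expands $M$ over \emph{nested} --- i.e.\ non-crossing --- partitions, each block $V$ contributing the weight $(N)_{|V|}/N^{|V|}$, which is \eqref{eq:mc-plan}. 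One could also run the argument through the finite $R$-transform as in \cite{Marcus-Spielman-Srivastava2022,Arizmendi-Perales2018}, or prove \eqref{eq:mc-plan} by induction on $j$ by feeding \eqref{eq:finite cumulant def} and the inductive cases of \eqref{eq:mc-plan} into Newton's recursion $m_j=\sum_{i=1}^{j-1}(-1)^{i-1}a_i\,m_{j-i}+\tfrac{(-1)^{j-1}j}{N}a_j$.

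I expect the main obstacle to be precisely this reconciliation step: showing that it is the $(N)_k/N^k$-rescaled finite free cumulants whose generating series plays the role of the free $R$-transform of the root measure of $p$, equivalently that crossing partitions cancel and the surviving weights are $\prod_{V\in\sigma}(N)_{|V|}/N^{|V|}$. Everything downstream is bookkeeping; indeed, even without \eqref{eq:mc-plan} the leading coefficient can be checked by hand, since the $\pi=1_j$ term of \eqref{eq:finite cumulant def} gives $a_j=\tfrac{(-1)^{j-1}(N)_j}{j\,N^{\,j-1}}\kappa_j^N(p)+(\text{terms in }\kappa_1^N(p),\dots,\kappa_{j-1}^N(p))$ while Newton's identities give $Nm_j(p)=p_j=(-1)^{j-1}j\,a_j+(\text{terms in lower }a_i,p_i)$, and combining them pins down the coefficient of $m_j(p)$ in $\kappa_j^N(p)$ as $N^j/(N)_j$, matching the prefactor in \eqref{eq:leading order of cumulants}; the monicity and degree of each $Q_\sigma$ are then manifest from its product form.
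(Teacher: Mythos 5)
There is a genuine gap: the moment--cumulant identity on which your whole argument rests, namely
\begin{equation*}
m_j(p)=\sum_{\sigma\in\mathcal{NC}(j)}\Bigl(\prod_{V\in\sigma}\frac{(N)_{|V|}}{N^{|V|}}\Bigr)\kappa_\sigma^N(p),
\end{equation*}
is false at finite $N$. You can already see this at $j=4$. Take $p$ centered, so $\kappa_1^N(p)=m_1(p)=0$. Newton's identities give $a_4=\tfrac{N^2m_2(p)^2}{8}-\tfrac{Nm_4(p)}{4}$, while \eqref{eq:finite cumulant def} gives $a_4=\tfrac{(N)_4}{8N^2}\kappa_2^N(p)^2-\tfrac{(N)_4}{4N^3}\kappa_4^N(p)$; combining these with $m_2(p)=\tfrac{N-1}{N}\kappa_2^N(p)$ yields
\begin{equation*}
m_4(p)=\frac{(N)_4}{N^4}\kappa_4^N(p)+\frac{(N-1)(2N-3)}{N^2}\,\kappa_2^N(p)^2,
\end{equation*}
whereas your formula predicts the coefficient of $\kappa_2^N(p)^2$ to be $2\bigl(\tfrac{(N)_2}{N^2}\bigr)^2=\tfrac{(N-1)(2N-2)}{N^2}$. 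The discrepancy sits exactly where you flagged the ``main obstacle'': the rescaled quantities $\tfrac{(N)_k}{N^k}\kappa_k^N(p)$ are \emph{not} the free cumulants of the empirical root measure (the computation above gives $\kappa_4^{\mathrm{free}}=\tfrac{(N)_4}{N^4}\kappa_4^N-\tfrac{N-1}{N^2}(\kappa_2^N)^2$ for centered $p$), so they do not satisfy the self-consistency relation $M(z)=1+zM(z)\tilde R(zM(z))$, the crossing partitions do not cancel at finite $N$, and none of your three proposed routes to the identity can succeed.

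What survives is the leading order. Your final paragraph correctly pins down the coefficient $N^j/(N)_j$ of $m_j(p)$ using only the $\pi=1_j$ term of \eqref{eq:finite cumulant def} together with Newton's identities, and the true correction terms are indeed carried by non-crossing partitions with coefficients of the form $1+O(1/N)$. But the lemma asserts the \emph{existence} of monic $Q_\sigma$ of degree $j+1-|\sigma|$ making an exact identity hold, and your candidate $Q_\sigma(N)=N^{1-|\sigma|}\prod_{V\in\sigma}(N)_{|V|}$ does not do so, as the $j=4$ computation shows (the two non-crossing pair partitions must share total weight $\tfrac{2N^3-5N^2+3N}{N^3}$, not $\tfrac{2N^3-4N^2+2N}{N^3}$). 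Establishing the statement requires the genuine finite free moment--cumulant formula of Arizmendi--Perales, which is a sum over the full partition lattice (pairs of partitions whose join is $1_j$) with M\"obius-function weights; the non-crossing partitions emerge only after extracting the top-degree-in-$N$ part of that sum, and the resulting weights are not multiplicative over blocks.
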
 It will also be convenient to represent polynomials by their \emph{finite} $R$-transform. \begin{definition}\label{def: R def}
     Let $p$ be a degree $N$ polynomial and let $P$ be a formal power series such that $P\left(\frac{\dd}{\dd x}\right)x^{N}=p(x)$. Then, the finite $R$-transform of $p$ is the truncated formal power series \begin{equation}
         R_{p}(s):=-\frac{P'(Ns)}{P(Ns)}\mod s^{N}.
     \end{equation}
\end{definition}

\begin{lemma}[See \cite{Marcus2021,Marcus-Spielman-Srivastava2022,Arizmendi-Perales2018}]\label{lem:R transform and Fourier lemma}
    Let $p$ and $q$ be degree $N$ polynomials, and let $P$ and $Q$ be formal power series such that $P\left(\frac{\dd}{\dd x}\right)x^{N}=p(x)$ and $Q\left(\frac{\dd}{\dd x}\right)x^{N}=q(x)$. Then,     $p\boxplus_{N}q(x)=P\left(\frac{\dd}{\dd x}\right)Q\left(\frac{\dd}{\dd x}\right)x^N$.
     Moreover, the coefficients of $R_{p}$ are the finite free cumulants of $p$, i.e.\ \begin{equation}
        R_{p}(s)=\kappa_{1}^{N}(p)+\kappa_{2}^{N}(p)s+\cdots+\kappa_{N}^{N}(p)s^{N-1}.
    \end{equation}
\end{lemma}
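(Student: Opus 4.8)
The plan is to prove both assertions by direct formal power series manipulations; the only structural input is the exponential formula for set partitions.

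\emph{The first assertion} is a bookkeeping computation. Writing $p(x) = x^{N}+\sum_{k=1}^{N}(-1)^{k}a_{k}x^{N-k}$ and using $\frac{\dd^{k}}{\dd x^{k}}x^{N}=(N)_{k}x^{N-k}$, one checks that $P(s)=1+\sum_{k=1}^{N}\frac{(-1)^{k}a_{k}}{(N)_{k}}s^{k}$ satisfies $P(\frac{\dd}{\dd x})x^{N}=p(x)$ (and any admissible $P$ agrees with this one on the coefficients that $x^{N}$ does not annihilate), and similarly $Q$ has coefficients $\frac{(-1)^{k}b_{k}}{(N)_{k}}$. Since constant-coefficient differential operators commute and $P(\frac{\dd}{\dd x})Q(\frac{\dd}{\dd x})=(PQ)(\frac{\dd}{\dd x})$ as operators, applying this to $x^{N}$ — where only the degree $\le N$ part of $PQ$ contributes — produces a polynomial whose $x^{N-k}$ coefficient is
\[
(-1)^{k}(N)_{k}\sum_{i+j=k}\frac{a_{i}b_{j}}{(N)_{i}(N)_{j}}.
\]
The elementary identity $\frac{(N)_{k}}{(N)_{i}(N)_{j}}=\frac{(N-i)!(N-j)!}{N!(N-k)!}$, valid whenever $i+j=k$, shows this is exactly the coefficient in the definition of $p\boxplus_{N}q$.

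\emph{For the second assertion}, substitute $s\mapsto Ns$ into $P$ and insert the defining relation \eqref{eq:finite cumulant def}; after simplifying the sign one gets, modulo $s^{N+1}$ (which is all that is needed to determine $R_{p}\bmod s^{N}$),
\[
P(Ns)=\sum_{k\ge 0}\frac{s^{k}}{k!}\sum_{\pi\in\mathcal{P}(k)}\prod_{V\in\pi}\Bigl(-N(|V|-1)!\,\kappa_{|V|}^{N}(p)\Bigr).
\]
By the exponential formula for set partitions this equals $\exp\bigl(f(s)\bigr)$ with $f_{m}=-N(m-1)!\,\kappa_{m}^{N}(p)$, i.e.
\[
f(s)=\sum_{m\ge 1}\frac{-N(m-1)!\,\kappa_{m}^{N}(p)}{m!}s^{m}=-N\sum_{m\ge 1}\frac{\kappa_{m}^{N}(p)}{m}s^{m}.
\]
Since $p$ is monic, $P(0)=1$, so we may take logarithms; differentiating $\log P(Ns)=f(s)$ in $s$ gives $\frac{NP'(Ns)}{P(Ns)}=f'(s)=-N\sum_{m\ge 1}\kappa_{m}^{N}(p)s^{m-1}$, hence $-\frac{P'(Ns)}{P(Ns)}=\sum_{m\ge 1}\kappa_{m}^{N}(p)s^{m-1}$, and reducing modulo $s^{N}$ yields $R_{p}(s)=\kappa_{1}^{N}(p)+\cdots+\kappa_{N}^{N}(p)s^{N-1}$.

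The only step that demands genuine care — and the one I expect to be the main obstacle — is matching the combinatorial prefactor $(-1)^{k-|\pi|}N^{|\pi|}\prod_{V}(|V|-1)!$ in \eqref{eq:finite cumulant def} with the product form $\prod_{V\in\pi}f_{|V|}$ required by the exponential formula (so that the identification $f_{m}=-N(m-1)!\,\kappa_{m}^{N}(p)$ is correct), together with keeping track of which powers of $s$ survive the truncations so that the ``formal power series'' identities are legitimate statements about polynomials of bounded degree. Everything else is routine once the rescaling $s\mapsto Ns$ has been carried out.
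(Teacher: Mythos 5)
Your proof is correct. The paper does not prove this lemma at all --- it simply cites \cite{Marcus2021,Marcus-Spielman-Srivastava2022,Arizmendi-Perales2018} --- and your argument is essentially the standard one found there: the first part is the multiplicativity of the ``symbol'' $P$ under $\boxplus_N$ (the coefficient identity $\frac{(N)_k}{(N)_i(N)_j}=\frac{(N-i)!(N-j)!}{N!(N-k)!}$ checks out), and the second part recovers the Arizmendi--Perales relation by recognizing \eqref{eq:finite cumulant def} as the exponential formula for $P(Ns)$ with block weights $-N(|V|-1)!\,\kappa_{|V|}^{N}(p)$, then taking the logarithmic derivative; your handling of the truncations (everything only needs to hold modulo $s^{N+1}$, which determines $R_p \bmod s^N$) is adequate.
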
 

We end this section by recalling that finite free cumulants are additive, namely, $\kappa_{j}^{N}(p \boxplus_N q) = \kappa_{j}^{N}(p) + \kappa_{j}^{N}(q)$ for $j=1, \dots, N$. It follows that the same is true for the finite R-transform, $R_{p \boxplus_N q} = R_{p} + R_{q}$. 

\subsection*{Acknowledgments}
Part of this  work is based on research conducted during the workshop   ``Recent Perspectives on Non-crossing Partitions through Algebra, Combinatorics, and Probability" hosted by the Erwin Schr\"odinger International Institute for Mathematics and Physics (ESI). We thank the ESI for its support.

We would like to thank Daniel Perales for helpful comments and discussions at early stages of this paper.

\section{Main results}\label{sec:Main results}

\subsection{Central limit theorems of repeated differentiation} For any real rooted degree $\ell$ polynomial $p$, we let $\mathbf{z}(p)$ denote the root vector of $p$ in the Weyl chamber \begin{equation*}
	\mathbb{W}^{\ell}=\left\{(x_{1},\dots,x_{\ell})^{\mathrm{T}}\in\R^{\ell}\ \big|\  x_{1}\geq x_2\geq \cdots\geq x_{\ell} \right\}.
\end{equation*} For any $\ell\in\N$, we consider the positive semi-definite matrix $\Sigma^{(\ell),\mathbf{m}}$ with entries \begin{equation}
	\Sigma^{(\ell),\mathbf{m}}_{ij}=\frac{ij}{4}\left(m_{4}(\mu)-1\right)m_{i}(\He_{\ell})m_{j}(\He_{\ell}).
\end{equation} As we shall see later, this matrix captures the covariance of the moments of $\tilde{p}_{\ell,N}$. To recover the covariance matrix of the roots we will need a linear transformation. Let $V$ be the (scaled) $\ell\times \ell$ Vandermonde matrix for the roots $z_{1,\ell}> z_{2,\ell}> \cdots > z_{\ell,\ell}$ of $\He_{\ell}$ with entries \begin{equation}
	V_{ij}=\frac{j}{\ell}z^{j-1}_{i,\ell},
\end{equation} and let $L=V^{-1}$.

\begin{theorem}\label{thm:critical point CLT}
	Let $\mu$ be a measure on the real line of mean $0$, variance $1$, and having all finite moments. Let $\tilde{p}_{\ell,N}$ be the polynomials defined by \eqref{eq:shifted der}. Then, \begin{equation}
		\sqrt{N}\left[\mathbf{z}\left(\tilde{p}_{\ell,N}\right)-\mathbf{z}\left(\He_{\ell}\right) \right]\Rightarrow \mathcal{N}(0,\Sigma^{(\ell),\mathbf{z}}),
	\end{equation}  where \begin{equation}
		\Sigma^{(\ell),\mathbf{z}}:=L^{\mathrm{T}}\Sigma^{(\ell),\mathbf{m}}L.
	\end{equation}
\end{theorem}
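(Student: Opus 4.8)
The plan is to prove a joint central limit theorem for the vector of finite free cumulants of $\tilde p_{\ell,N}$ and then transport it, by the delta method, first to the moment vector $(m_1,\dots,m_\ell)$ and then to the sorted root vector $\mathbf z$. Two smoothness facts make this transport routine. By \eqref{eq:finite cumulant def} the map sending $(\kappa_1^\ell,\dots,\kappa_\ell^\ell)$ of a degree $\ell$ polynomial to its coefficient vector has triangular Jacobian with nonvanishing constant diagonal, hence is a local diffeomorphism with polynomial inverse; and since $\He_\ell$ has $\ell$ simple real roots, the coefficient-to-sorted-root map is a local diffeomorphism near $\He_\ell$. As $\tilde p_{\ell,N}$ is real rooted (it is obtained from the real rooted $p_N$ by repeated real differentiation and an affine substitution) and converges to $\He_\ell$ in $\mathrm P_\ell(\R)$ almost surely by Proposition~\ref{thm:HS}, with probability tending to $1$ it lies where both maps are diffeomorphisms, so the limit law transports. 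On the last leg the differential of $\mathbf z\mapsto(m_1,\dots,m_\ell)$ at $\mathbf z(\He_\ell)$ is precisely the matrix $V^{\mathrm T}$ of the statement, because $\partial m_i/\partial z_k=\tfrac i\ell z_k^{\,i-1}$; thus the root-level covariance equals $L^{\mathrm T}$ times the moment-level covariance times $L$, and it remains only to identify the latter as $\Sigma^{(\ell),\mathbf m}$.

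I would first compute the cumulants of $\tilde p_{\ell,N}$. Writing $\bar X=\tfrac1N\sum_jX_j$ and $\hat p_N(x)=p_N(x+\bar X)$, translation invariance gives $\kappa_1^N(\hat p_N)=\kappa_1^N(p_N)-\bar X=0$ and $\kappa_j^N(\hat p_N)=\kappa_j^N(p_N)$ for $j\ge2$. Combining Lemma~\ref{lem:cumulant-der formula} (with $k=\ell$) and the dilation rule $\kappa_j^\ell(\mathcal D_ap)=a^{\,j}\kappa_j^\ell(p)$ yields $\kappa_1^\ell(\tilde p_{\ell,N})=0$ and
\begin{equation*}
	\kappa_j^\ell(\tilde p_{\ell,N})=\ell^{\,j-1}\,N^{\,1-j/2}\,\kappa_j^N(p_N),\qquad 2\le j\le\ell .
\end{equation*}
By Lemma~\ref{lem:finite cumulant leading order}, each $\kappa_j^N(p_N)$ is a fixed polynomial, with coefficients rational in $N$ and convergent as $N\to\infty$, in the empirical moments $m_i(p_N)=\tfrac1N\sum_{t=1}^NX_t^{\,i}$; in low degree this is explicit, e.g.\ $\kappa_1^N(p_N)=\bar X$, $\kappa_2^N(p_N)=\tfrac1{N-1}\sum_t(X_t-\bar X)^2$ is the sample variance, and in general $\kappa_j^N(p_N)$ converges in probability to a deterministic limit (for $j=3$, to $m_3(\mu)$).

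Now I would apply the classical multivariate CLT, $\sqrt N\big[(m_i(p_N))_{i=1}^\ell-(m_i(\mu))_{i=1}^\ell\big]\Rightarrow\mathcal N(0,\Gamma)$ with $\Gamma_{ij}=m_{i+j}(\mu)-m_i(\mu)m_j(\mu)$, and propagate it through the identities above by the delta method, using $m_1(\mu)=0$ and $m_2(\mu)=1$. The $j=1$ coordinate vanishes identically. For $j=2$, since $\kappa_2^N(p_N)$ is the sample variance and $\var(X^2)=m_4(\mu)-1$,
\begin{equation*}
	\sqrt N\big(\kappa_2^\ell(\tilde p_{\ell,N})-\ell\big)=\ell\,\sqrt N\big(\kappa_2^N(p_N)-1\big)\ \Rightarrow\ \mathcal N\!\big(0,\ \ell^2(m_4(\mu)-1)\big).
\end{equation*}
For $j\ge4$, $\sqrt N\,\kappa_j^\ell(\tilde p_{\ell,N})=\ell^{\,j-1}N^{(3-j)/2}\kappa_j^N(p_N)\to0$ since $(3-j)/2<0$ and $\kappa_j^N(p_N)$ is bounded in probability. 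The single borderline case is $j=3$, where $\sqrt N\,\kappa_3^\ell(\tilde p_{\ell,N})=\ell^2\kappa_3^N(p_N)\to\ell^2m_3(\mu)$ sits at the same scale $N^{-1/2}$; handling this term is, I expect, the crux of the argument, and it is the one place where care is needed (it is inert when $\mu$ is symmetric). Granting this, the only surviving randomness in the cumulant fluctuation vector is in the $\kappa_2$ slot, with a single Gaussian limit $G\sim\mathcal N(0,\ell^2(m_4(\mu)-1))$.

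Finally I would push this through to the moments and roots. By the dilation rule the polynomials with cumulant vector $(0,\kappa_2,0,\dots,0)$ are exactly $\mathcal D_{\sqrt{\kappa_2/\ell}}\He_\ell$, so along that curve $m_i=(\kappa_2/\ell)^{i/2}m_i(\He_\ell)$ and $\partial m_i/\partial\kappa_2=\tfrac{i}{2\ell}m_i(\He_\ell)$ at $\kappa_2=\ell$. Hence
\begin{equation*}
	\sqrt N\big[(m_i(\tilde p_{\ell,N}))_{i=1}^\ell-(m_i(\He_\ell))_{i=1}^\ell\big]\ \Rightarrow\ \mathcal N\!\big(0,\ \Sigma^{(\ell),\mathbf m}\big),\qquad \Sigma^{(\ell),\mathbf m}_{ij}=\tfrac{ij}{4}(m_4(\mu)-1)m_i(\He_\ell)m_j(\He_\ell),
\end{equation*}
and applying the local smooth inverse of $\mathbf z\mapsto(m_1,\dots,m_\ell)$, whose differential at $\mathbf z(\He_\ell)$ is $L=V^{-1}$, gives $\sqrt N\big[\mathbf z(\tilde p_{\ell,N})-\mathbf z(\He_\ell)\big]\Rightarrow\mathcal N(0,\,L^{\mathrm T}\Sigma^{(\ell),\mathbf m}L)=\mathcal N(0,\Sigma^{(\ell),\mathbf z})$. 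Apart from the $j=3$ bookkeeping, the remaining technical points are routine: enough uniformity in the rational-in-$N$ coefficients of Lemma~\ref{lem:finite cumulant leading order} to run the delta method, and the almost-sure eventual simplicity of the roots of $\tilde p_{\ell,N}$ needed to linearize the root map.
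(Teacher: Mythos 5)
Your overall route is the same as the paper's: a CLT for the vector of finite free cumulants of $\tilde p_{\ell,N}$ (the paper's Theorem~\ref{thm:finite free cumulant fluc}), pushed to the moment vector and then to the sorted roots by two applications of the delta method, with the Jacobian of the root-to-moment map at $\He_\ell$ identified as $V$ and its inverse as $L$. The only structural difference is how the cumulant CLT is obtained: you express $\kappa_j^N(p_N)$ directly as an $N$-dependent polynomial in the empirical moments, whereas the paper first proves $\kappa_j^N(p_N)=\kappa_j(p_N)+O(1/N)$ by comparison with the free cumulants (Lemma~\ref{lem:finite to full cumulant comp}) and then applies the delta method to the classical moment CLT (Lemma~\ref{lem:Moment CLT for p_N}). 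These are equivalent in substance, and your identification of the surviving $(2,2)$ variance as $\ell^2(m_4(\mu)-1)$ via the sample variance, the vanishing of the $j\ge 4$ terms, and both Jacobian computations all match the paper.

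The step you explicitly leave open, the $j=3$ coordinate, is a genuine gap in your write-up, and your suspicion that it is the crux is well founded: it cannot be closed as the statement stands. Your computation $\sqrt N\,\kappa_3^\ell(\tilde p_{\ell,N})=\ell^2\kappa_3^N(p_N)\to\ell^2 m_3(\mu)$ almost surely is correct, since finite free cumulants of order $j\ge 2$ are translation invariant and $\kappa_3^N(p_N)\to\kappa_3(\mu)=m_3(\mu)$. Thus for skewed $\mu$ the third cumulant contributes a \emph{deterministic} drift at scale $N^{-1/2}$, so $\sqrt N\bigl[\boldsymbol{\kappa}^{\ell}(\tilde p_{\ell,N})-(0,\ell,0,\dots,0)^{\mathrm T}\bigr]$ converges to a Gaussian with a nonzero mean in the third slot rather than to the centered law asserted in Theorem~\ref{thm:finite free cumulant fluc}; the paper's proof of that theorem passes over exactly this point, asserting that \eqref{eq:original finite free cumulant CLT} and \eqref{eq:der-cumulant relation} give a centered degenerate limit for $j=3$. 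Consequently Theorem~\ref{thm:critical point CLT} as stated (and your proof of it) is valid only under the additional hypothesis $m_3(\mu)=0$; in general one must recenter by a deterministic $O(N^{-1/2})$ vector, equivalently the limiting Gaussian acquires the mean $\ell^2 m_3(\mu)\,L^{\mathrm T}\mathbf{w}$ with $\mathbf{w}_i=\partial m_i/\partial\kappa_3$ evaluated at $\He_\ell$. So the honest conclusion is not that you are missing an idea the paper supplies, but that the term you flagged genuinely obstructs the stated result; filling your gap means either adding the symmetry-type assumption or modifying the centering.
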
 \begin{remark}
	The shift in \eqref{eq:shifted der} removed an order $1$ random shift of the roots that is asymptotically converging to a standard Gaussian. Theorem \ref{thm:critical point CLT} captures the order $N^{-1/2}$ fluctuations of the roots. While forming a precise statement of the smaller fluctuations is notationally cumbersome, finite free cumulants can be used to show (at least heuristically) that for large $N$\begin{equation}
		\mathbf{z}\left(\mathcal{D}_{\sqrt{N}}\partial_{\ell|N}p_{N} \right)\approx \mathbf{z}\left(\He_{\ell} \right)+\sum_{k=0}^{\ell-1} N^{-k/2}Z_k\mathbf{v}_{k,\ell},
	\end{equation} where $Z_{1},Z_{2},\dots$ are standard Gaussian random variables and each  $\mathbf{v}_{j,\ell}$ is a deterministic vector depending on $\mu$ only through its first $2j+2$ moments. 
\end{remark}

\begin{figure}[ht]
\centering
\includegraphics[scale = 0.45]{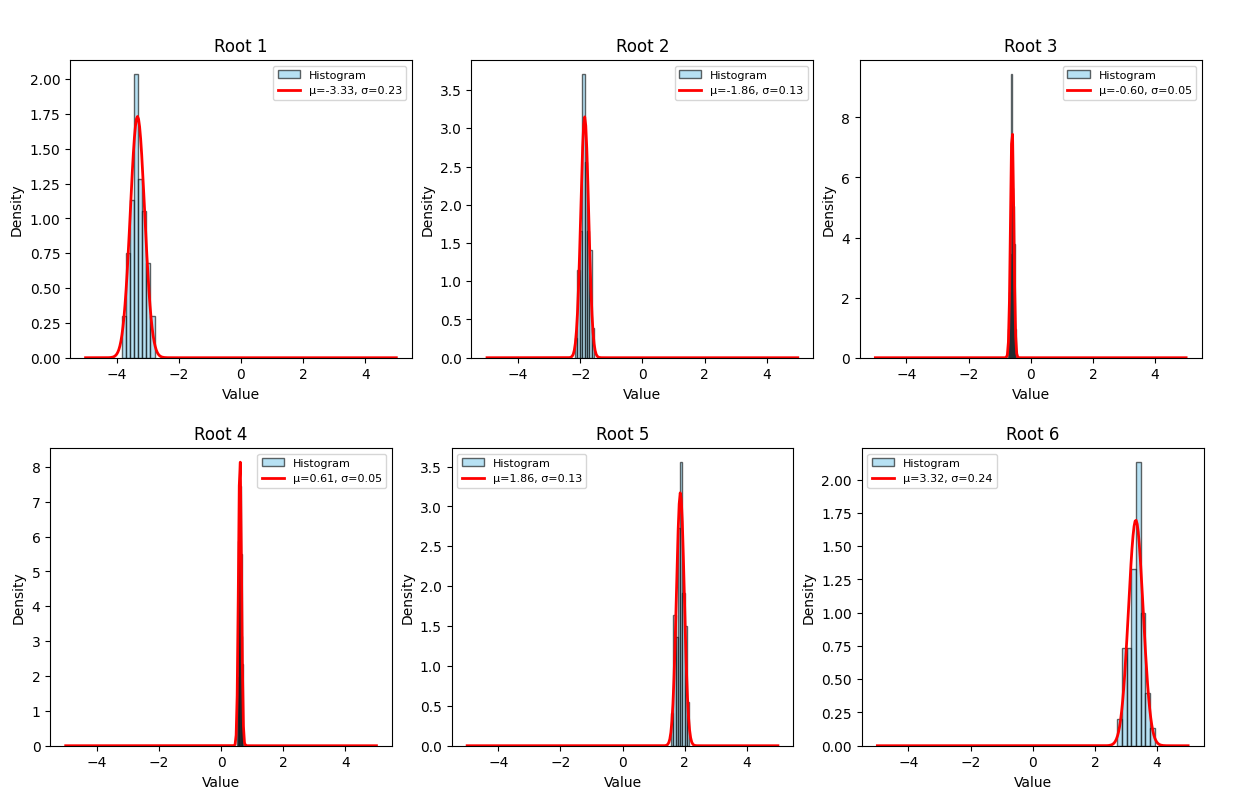}
\caption{Histogram of 100 realizations of the roots of $\tilde{p}_{\ell,N}$ illustrating the fluctuation described in Theorem  \ref{thm:critical point CLT}. For this simulation, $N=100$, $\ell=6$, and we used $\mu\sim\mathcal{N}(0,1)$.  }
\label{Figure1}
\end{figure}

We also have the following CLT for the polynomials. \begin{theorem}\label{thm:polynomial CLT}
	Let $\mu$ be a measure on the real line of mean $0$, variance $1$, and having all finite moments. Let $\tilde{p}_{\ell,N}$ be the polynomials defined by \eqref{eq:shifted der}. Then, \begin{equation}\label{eq:polyCLT}
		\sqrt{N}\left[\tilde{p}_{\ell,N}(x)-\He_{\ell}(x) \right]\Rightarrow \sqrt{m_{4}(\mu)-1}Z\binom{\ell}{2}\He_{\ell-2}(x)
	\end{equation} in $\mathrm{P}_{\ell}(\R)$, where $Z$ is a standard Gaussian random variable. 
\end{theorem}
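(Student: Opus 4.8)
The plan is to track everything through the finite free cumulants. By \eqref{eq:finite cumulant def} the coefficients of a degree-$\ell$ polynomial are a fixed polynomial function of its cumulant vector $\big(\kappa_1^{\ell},\dots,\kappa_\ell^{\ell}\big)$, and convergence in $\mathrm{P}_{\ell}(\R)$ is convergence of coefficient vectors; since $\kappa_j^{\ell}(\He_\ell)=\ell\,\delta_{j2}$, a delta-method argument reduces \eqref{eq:polyCLT} to identifying the joint limit of $\sqrt{N}\,\big(\kappa_j^{\ell}(\tilde p_{\ell,N})-\ell\,\delta_{j2}\big)_{j=1}^{\ell}$. So the first task is to compute those cumulants. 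Writing $\hat p_N(x):=p_N\!\big(x+\bar X\big)$ with $\bar X:=\tfrac1N\sum_j X_j$, translating the roots is a finite free convolution with $(x+\bar X)^N$ (Lemma~\ref{lem:R transform and Fourier lemma}), and this polynomial has only its first cumulant nonzero; by additivity of the finite free cumulants, $\kappa_1^{N}(\hat p_N)=0$ and $\kappa_j^{N}(\hat p_N)=\kappa_j^{N}(p_N)$ for $j\ge 2$. Combining Lemma~\ref{lem:cumulant-der formula} with the homogeneity $\kappa_j^{\ell}(\mathcal D_a q)=a^{j}\kappa_j^{\ell}(q)$ yields
\[
\kappa_j^{\ell}(\tilde p_{\ell,N})\;=\;N^{j/2}\Big(\tfrac{\ell}{N}\Big)^{j-1}\kappa_j^{N}(\hat p_N)\;=\;\ell^{\,j-1}\,N^{\,1-j/2}\,\kappa_j^{N}(\hat p_N),\qquad 1\le j\le \ell,
\]
so $\kappa_1^{\ell}(\tilde p_{\ell,N})\equiv 0$.

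Next I would isolate the one surviving fluctuation. For $j=2$, Lemma~\ref{lem:finite cumulant leading order} with $\kappa_1^{N}(\hat p_N)=0$ collapses to $\kappa_2^{N}(\hat p_N)=\tfrac{N}{N-1}\,m_2(\hat p_N)$, where $m_2(\hat p_N)=\tfrac1N\sum_i X_i^{2}-\bar X^{2}$. Hence $\kappa_2^{\ell}(\tilde p_{\ell,N})=\ell\,\tfrac{N}{N-1}\,m_2(\hat p_N)$, and the classical CLT for the i.i.d.\ variables $X_i^{2}$ (mean $1$, variance $m_4(\mu)-1$), together with $\sqrt N\,\bar X^{2}=o_P(1)$, gives
\[
\sqrt{N}\,\big(\kappa_2^{\ell}(\tilde p_{\ell,N})-\ell\big)\;\Longrightarrow\;\ell\sqrt{m_4(\mu)-1}\;Z .
\]
For $j\ge 3$, Lemma~\ref{lem:finite cumulant leading order} expresses $\kappa_j^{N}(\hat p_N)$ as a fixed polynomial in $m_2(\hat p_N),\dots,m_j(\hat p_N)$ with convergent $N$-dependent coefficients, so $\kappa_j^{N}(\hat p_N)=O_P(1)$ and thus $\sqrt{N}\,\kappa_j^{\ell}(\tilde p_{\ell,N})=\ell^{\,j-1}N^{(3-j)/2}\kappa_j^{N}(\hat p_N)$, which is $o_P(1)$ for $j\ge 4$; the index $j=3$ is borderline and requires a short separate argument to see that its effect on the polynomial limit disappears. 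The upshot is that $\sqrt{N}\,\big(\kappa_j^{\ell}(\tilde p_{\ell,N})-\ell\,\delta_{j2}\big)_{j=1}^{\ell}$ converges to $\big(0,\ \ell\sqrt{m_4(\mu)-1}\,Z,\ 0,\dots,0\big)$ (in particular the cumulants themselves converge to those of $\He_\ell$, re-proving Proposition~\ref{thm:HS}).

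Finally I would reassemble the polynomial. It remains to compute the derivative of the cumulants-to-polynomial map at $(0,\ell,0,\dots,0)$ along the second coordinate, which is cleanest via the finite $R$-transform (Definition~\ref{def: R def}, Lemma~\ref{lem:R transform and Fourier lemma}): the degree-$\ell$ polynomial with cumulant vector $(0,\kappa_2,0,\dots,0)$ has $R(s)=\kappa_2 s$, and solving $-P'(\ell s)/P(\ell s)=\kappa_2 s$ shows it equals $\exp\!\big(-\tfrac{\kappa_2}{2\ell}\tfrac{\dd^2}{\dd x^2}\big)x^{\ell}$, which at $\kappa_2=\ell$ is $\He_\ell$. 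Differentiating in $\kappa_2$ at $\kappa_2=\ell$ gives the directional derivative $-\tfrac{1}{2\ell}\tfrac{\dd^2}{\dd x^2}\He_\ell(x)=-\tfrac{\ell-1}{2}\He_{\ell-2}(x)$, using $\He_\ell''=\ell(\ell-1)\He_{\ell-2}$. Combining with the previous step,
\[
\sqrt{N}\,\big(\tilde p_{\ell,N}(x)-\He_\ell(x)\big)\;\Longrightarrow\;\ell\sqrt{m_4(\mu)-1}\,Z\cdot\Big(-\tfrac{\ell-1}{2}\He_{\ell-2}(x)\Big)\;=\;-\binom{\ell}{2}\sqrt{m_4(\mu)-1}\,Z\,\He_{\ell-2}(x),
\]
which is \eqref{eq:polyCLT} since $-Z\overset{d}{=}Z$.

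I expect the main obstacle to be the control of the higher cumulants: showing that $\kappa_j^{\ell}(\tilde p_{\ell,N})$ for every $j\ge 3$ fails to survive the $\sqrt{N}$ scaling (the gain $N^{1-j/2}$ produced by composing the $(N-\ell)$-fold differentiation with the $\sqrt{N}$-dilation is exactly what makes this work), and, relatedly, upgrading the formal linearization to a rigorous delta-method statement — for which one wants joint tightness of the cumulant vector and continuity (smoothness) of the coefficient map in \eqref{eq:finite cumulant def} near the Hermite point. The Hermite-polynomial bookkeeping in the last step is then routine.
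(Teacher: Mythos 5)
Your overall route is the same as the paper's: reduce \eqref{eq:polyCLT} to a CLT for the vector of degree-$\ell$ finite free cumulants of $\tilde p_{\ell,N}$ (this is the paper's Theorem \ref{thm:finite free cumulant fluc}) and then push it through the cumulants-to-coefficients map. Your $j=2$ computation, the decay for $j\ge 4$, and the linearization $-\tfrac{1}{2\ell}\He_\ell''=-\tfrac{\ell-1}{2}\He_{\ell-2}$ are all correct and match what the paper does.

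The genuine gap is exactly the point you deferred: $j=3$. The claim that ``its effect on the polynomial limit disappears'' cannot be made true in general. By Lemma \ref{lem:cumulant-der formula} combined with the $\sqrt{N}$-dilation you have the exact identity $\sqrt{N}\,\kappa_3^{\ell}(\tilde p_{\ell,N})=\ell^{2}\kappa_3^{N}(p_N)$, with no decay, and $\kappa_3^{N}(p_N)\to\kappa_3(\mu)=m_3(\mu)$ almost surely (for $j\le 3$ all partitions are non-crossing, so the third free and classical cumulants agree and equal $m_3$ for a centered measure). Since $\kappa_1^{\ell}(\tilde p_{\ell,N})=0$, every partition of $[3]$ other than $1_3$ contributes zero in \eqref{eq:finite cumulant def}, so the coefficient of $x^{\ell-3}$ in $\tilde p_{\ell,N}$ equals $-\tfrac{(\ell)_3}{3\ell^{2}}\kappa_3^{\ell}(\tilde p_{\ell,N})$ on the nose, while the corresponding coefficient of both $\He_\ell$ and $\He_{\ell-2}$ is zero. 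Hence $\sqrt{N}$ times the $x^{\ell-3}$ coefficient of $\tilde p_{\ell,N}-\He_\ell$ converges almost surely to $-\tfrac{(\ell)_3}{3}m_3(\mu)$, which is nonzero whenever $\ell\ge 3$ and $\mu$ is skewed. Carrying this through your own $R$-transform linearization (the directional derivative along $\kappa_3$ is $-\tfrac{1}{3\ell^{2}}\tfrac{\dd^3}{\dd x^3}$) produces an additional deterministic term $-2\binom{\ell}{3}m_3(\mu)\He_{\ell-3}(x)$ on the right-hand side of \eqref{eq:polyCLT}. So the statement as written holds only under the extra hypothesis $m_3(\mu)=0$ (or with that correction term added); note that the paper's own proof makes the same silent step, since the $j=3$ case of \eqref{eq:finite cumulant CLT step} asserts convergence to $\mathcal{N}(0,0)$ even though the left-hand side tends to the constant $\ell^{2}m_3(\mu)$, and the ``$+o(1)$'' absorbing the non-pair partitions in the displayed chain is not justified for the $1_3$ term. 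With that hypothesis (or correction) in place, the rest of your argument goes through.
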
 
Notice that the roots of the right-hand side \label{eq:polyCLT} do not depend on $Z$ or on $m_4$. In Figure \ref{Figure2} we compare the roots of $\sqrt{N}\left[\tilde{p}_{\ell,N}(x)-\He_{\ell}(x)\right]$ with the roots of $\He_{\ell-2}$ for $\ell=8$.


\begin{figure}[ht]
\centering
\includegraphics[width
= 12 cm]{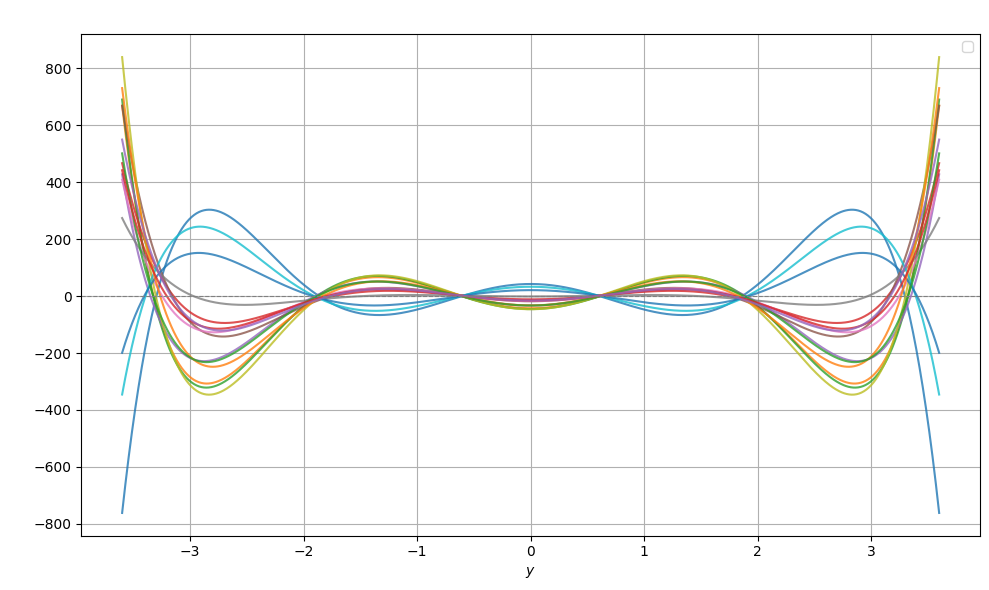}
\caption{Plot of a sample of 15 trials of the random polynomials $\tilde{p}_{8,200}(x)-\He_{8}(x)$. }
\label{Figure2}
\end{figure}

\begin{remark}
	From a probabilistic perspective Theorem \ref{thm:polynomial CLT} is natural, giving the Gaussian fluctuations of a random sequence around a deterministic limit. However, it is not clear to us whether Theorem \ref{thm:polynomial CLT} can be given a natural interpretation from the perspectives of finite free probability since subtracting polynomials is not a nice operation in this theory. We do, however, point out the following. Let $P_{N}$ be a degree $N$ polynomial such that $p_N(x)=P_{N}\left(\frac{\dd}{\dd x} \right)x^{N}$. It is straightforward to show that $\tilde{p}_{\ell,N}(x)=e^{-m_{1}(p_N)\sqrt{N}\frac{\dd}{\dd x}}P_{N}\left(\sqrt{N}\frac{\dd}{\dd x}\right)x^{\ell}$. One interpretation of Theorem \ref{thm:polynomial CLT} is that for any $\ell\in\N$ \begin{equation}
		\lim\limits_{N\rightarrow\infty}\sqrt{N}\left[e^{-m_{1}(p_N)\sqrt{N}s}P_{N}\left(\sqrt{N}s\right)-e^{-s^2/2} \right]= \frac{\sqrt{m_{4}(\mu)-1}}{2}Zs^2e^{-s^2/2}\mod s^{\ell}.
	\end{equation} Thus, \begin{equation}
		e^{-m_{1}(p_N)\sqrt{N}s}P_{N}\left(\sqrt{N}s\right)\approx e^{-s^2/2}\left(1+\frac{\sqrt{m_{4}(\mu)-1}Z}{2\sqrt{N}}s^2\right),
	\end{equation} and hence by Lemma \ref{lem:R transform and Fourier lemma} \begin{equation}
		\tilde{p}_{\ell,N}(x)\approx \He_{\ell}(x)\boxplus_{\ell}x^{\ell-2}\left(x^2+\frac{\sqrt{m_{4}(\mu)-1}}{\sqrt{N}}\binom{\ell}{2}Z\right).
	\end{equation}
	
	We also point out that the right-hand side of \eqref{eq:polyCLT} vanishes at the roots of $\He_{\ell-2}$. Thus, \eqref{eq:polyCLT} and the three-term recurrence of Hermite polynomials imply that the fluctuations of $\tilde{p}_{\ell,N}(x)-\He_{\ell}(x)$ are smaller than $N^{-1/2}$ when \begin{equation}\label{eq:lower order fluctuations}
		x-\frac{1}{G_{\ell}(x)}=0,
	\end{equation} where $G_{\ell}(x)=\frac{1}{\ell}\sum_{j=1}^{\ell}\frac{1}{x-z_{j,\ell}}$ is the Cauchy transform of the uniform distribution on the roots of $\He_{\ell}$. The function $T(x)=x-G_{\ell}(x)^{-1}$, and similar functions, often appear in dynamic descriptions of repeated differentiation and the related process of \emph{fractional free convolution powers} \cite{Shlaykhtenko-Tao2020,Hall-Ho-Jalowy-Kabluchko2023Repeat}. The vanishing of order $N^{-1/2}$ fluctuations at solutions of \eqref{eq:lower order fluctuations} may have a description in the language of PDEs and optimal transport similar to heuristic results derived in \cite{Hall-Ho-Jalowy-Kabluchko2023Repeat}.
\end{remark}

	\subsection{Limiting polynomials for roots outside the Gaussian domain} Theorem \ref{thm:ID limiting polynomial} below provides an analogue of Proposition \ref{thm:HS} for sequences of random polynomials $\{q_N\}_{N=1}^{\infty}$, where for each $N$ the polynomial $q_{N}$ has iid roots, but these roots are not necessarily in the domain of attraction of the Gaussian distribution. We begin with our assumptions. \begin{definition}\label{assump:ID assumptions}
		Let $\{q_N\}_{N=1}^{\infty}$ be a sequence of real rooted polynomials indexed by their degree of the form \begin{equation}
			q_{N}(x)=\prod_{j=1}^{N}\left(x-X_{j,N}\right).
		\end{equation} We say a sequence of polynomials $\{q_{N}\}$ is generated by the L\'evy triple $(c,\sigma^2,\nu)$ if \begin{enumerate}
		\item For any $N\in\N$, $X_{1,N}, X_{2,N},\dots, X_{N,N}$ are iid random variables. 
		\item There exists $\sigma\geq 0$, $c\in\R$, and a measure $\nu$ on $\R$ satisfying \begin{equation}
			\int_{\R}\min(x^2,1)\ \dd \nu(x)<\infty,\quad\text{and}\quad \nu(\{0\})=0,
		\end{equation} such that  $S_{N}=\sum_{j=1}^{N}X_{j,N}$ converges in distribution as $N\rightarrow\infty$ to a random variable $Y$ with log-characteristic function  \begin{equation}\label{eq:ID char}
		\log\E e^{itY}=-\frac{\sigma^2}{2}t^2+itc+\int_{\R}\left(e^{itx}-1-\frac{itx}{1+x^2}\right)\dd\nu(x).
	\end{equation}
	\end{enumerate}
	\end{definition} The limit of repeated differentiation of polynomials generated by a L\'evy triple is captured through a random entire function built from $(c,\sigma^2,\nu)$. We will need a representation of $Y$ in terms of a Poisson point process. We follow the approach of \cite{Ferguson-Klass1972}. We define two functions $N_{+}:(0,\infty)\rightarrow\R$ and $N_{-}:(0,\infty)\rightarrow\R$ as \begin{equation}
	N_{+}(t)=-\nu\left([t,\infty) \right),\text{ and } N_{-}=-\nu\left((-\infty,-t] \right).
\end{equation} These functions are non-decreasing, and we thus define the generalized inverses $N_{\pm}^{-1}(y)=\inf\{t:\ N_{\pm}(t)\geq y\}$ where $y\in(-\infty,0)$. Define the constants $c_{j}$ for $j\in\Z\setminus\{0\}$ by \begin{equation}
c_{j}=\mathrm{sign}(j)\int_{|j|-1}^{|j|}\frac{N_{\mathrm{sign}(j)}^{-1}(-t)}{1+N_{\mathrm{sign}(j)}^{-1}(-t)^2}\dd t.
\end{equation} Let $\{\alpha_{j}\}_{j\in\Z\setminus\{0\}}$ be a Poisson point process with intensity measure $\nu$ indexed such that $\alpha_{1}\geq \alpha_{2}\geq\cdots\geq 0$ and $\alpha_{-1}\leq \alpha_{-2}\leq\cdots\leq 0$. It then follows from \cite[(7)]{Ferguson-Klass1972} that \begin{equation}\label{eq:point process Y def}
Y=c+\sigma Z+\sum_{j\in\Z\setminus\{{0}\}}(\alpha_{j}-c_{j}),
\end{equation}
has log-characteristic function given by \eqref{eq:ID char}, where $Z$ is a standard Gaussian random variable independent of the point process. Define the random entire functions $f$ and $g$ by \begin{equation}\label{eq:random f def}
	f(z):=e^{-Yz-\frac{\sigma^2}{2}z^2}\prod_{j\in\Z\setminus\{0\}}\left(1-\alpha_{j}z\right)e^{\alpha_{j}z}=:e^{-Yz-\frac{\sigma^2}{2}z^2}g(z).
\end{equation}
	
	\begin{theorem}\label{thm:ID limiting polynomial}
		Let $\{q_N\}$ be a sequence of random polynomials generated by the L\'evy triple $(c,\sigma^2,\nu)$ and let $f$ and $g$ be as in \eqref{eq:random f def}. For $\ell\in\N$, let \begin{equation}\label{eq:id normalized derivative}
			\tilde{q}_{\ell,N}(x)=\mathcal{D}_{N}\partial_{\ell|N}q_{N}\left(x\right).
		\end{equation} Then, for any $\ell\in\N$,\begin{equation}\label{eq:id limit}
		\lim\limits_{N\rightarrow\infty}\tilde{q}_{\ell,N}(x)=f\left(\frac{\dd}{\dd x}\right)x^{\ell}=\left(x-Y\right)^{\ell}\boxplus_{\ell}\sigma^{\ell}\He_{\ell}\left(\frac{x}{\sigma}\right)\boxplus_{\ell} g\left(\frac{\dd}{\dd x}\right)x^{\ell},
	\end{equation} in distribution in $\mathrm{P}_{\ell}(\R)$.
	\end{theorem}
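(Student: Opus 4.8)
The plan is to work entirely at the level of finite free cumulants (equivalently, the finite $R$-transform), where repeated differentiation acts by the simple scaling of Lemma~\ref{lem:cumulant-der formula}, and then transfer the cumulant convergence back to convergence of polynomials in $\mathrm{P}_\ell(\R)$. Write $Q_N$ for the formal power series with $Q_N(\tfrac{\dd}{\dd x})x^N = q_N(x)$, so that $\tilde q_{\ell,N}(x) = Q_N\!\left(N\tfrac{\dd}{\dd x}\right)x^\ell$ after applying $\mathcal D_N$ and $\partial_{\ell|N}$; by Lemma~\ref{lem:R transform and Fourier lemma} the coefficients of the truncated series $R_{\tilde q_{\ell,N}}$ are the finite free cumulants $\kappa_j^\ell(\tilde q_{\ell,N})$ for $1\le j\le \ell$. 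By Lemma~\ref{lem:cumulant-der formula} and the dilation rule $\kappa_j^\ell(\mathcal D_a p) = a^j \kappa_j^\ell(p)$ we get, for each fixed $j$,
\begin{equation*}
	\kappa_j^\ell(\tilde q_{\ell,N}) = N^j\left(\frac{\ell}{N}\right)^{j-1}\kappa_j^N(q_N) = \ell^{j-1} N\,\kappa_j^N(q_N).
\end{equation*}
Since $q_N$ has iid roots $X_{j,N}$, its finite free cumulants are those of a single ``coordinate'': using Lemma~\ref{lem:finite cumulant leading order} together with the fact that finite free cumulants of $\prod_{j=1}^N(x-X_{j,N})$ are built additively from the shifted base point (and the moments $m_j(q_N) = \frac1N\sum X_{j,N}^i$ concentrate), one shows $N\kappa_j^N(q_N)$ converges in distribution to the coefficient of $s^{j-1}$ in the $R$-transform attached to the limiting infinitely divisible law $Y$. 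Concretely, I expect $N\kappa_1^N(q_N)=\sum_j X_{j,N}=S_N \Rightarrow Y$, $N\kappa_2^N(q_N)\Rightarrow \sigma^2 + \sum_j \alpha_j^2$ (the Gaussian part plus the Poisson jumps squared), and more generally $N\kappa_j^N(q_N)\Rightarrow (-1)^j\big(\delta_{j2}\sigma^2 \text{-term}\big) + \sum_{i}\alpha_i^{\,j}$-type sums, matching the logarithmic derivative of the Weierstrass-type product defining $f$.

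The second step is to identify the limiting cumulants with the three displayed objects in \eqref{eq:id limit}. Here the key computation is that the finite $R$-transform of $f(\tfrac{\dd}{\dd x})x^\ell$ is, by Definition~\ref{def: R def}, $-\ell\, f'(\ell s)/f(\ell s) \bmod s^\ell$; differentiating the product \eqref{eq:random f def} logarithmically gives
\begin{equation*}
	-\frac{f'(z)}{f(z)} = Y + \sigma^2 z + \sum_{j\in\Z\setminus\{0\}}\left(\frac{\alpha_j}{1-\alpha_j z}-\alpha_j\right) = Y + \sigma^2 z + \sum_{j}\sum_{m\ge 2}\alpha_j^{\,m} z^{m-1},
\end{equation*}
which exhibits $f(\tfrac{\dd}{\dd x})x^\ell$ as the finite free convolution of the degenerate polynomial $(x-Y)^\ell$ (contributing only $\kappa_1$), the scaled Hermite $\sigma^\ell \He_\ell(x/\sigma)$ (contributing only $\kappa_2=\ell\sigma^2$, by the remark that $\kappa_j^\ell(\He_\ell)=\ell\delta_{j2}$), and $g(\tfrac{\dd}{\dd x})x^\ell$ (contributing the higher jump sums), using additivity of $R$-transforms from Lemma~\ref{lem:R transform and Fourier lemma}. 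Matching termwise with the limits from Step~1 then gives convergence of each $\kappa_j^\ell(\tilde q_{\ell,N})$, $1\le j\le\ell$, to the corresponding coefficient of $R_{f(\frac{\dd}{\dd x})x^\ell}$.

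For the third step I pass from joint convergence of the finitely many cumulants $(\kappa_1^\ell,\dots,\kappa_\ell^\ell)(\tilde q_{\ell,N})$ to convergence in distribution in $\mathrm{P}_\ell(\R)$: the map from the cumulant vector to the coefficient vector via \eqref{eq:finite cumulant def} is a fixed polynomial (hence continuous) map $\R^\ell\to\R^{\ell+1}$, so the continuous mapping theorem applies once the joint convergence is established. The main obstacle is precisely establishing that \emph{joint} distributional convergence of $\big(N\kappa_1^N(q_N),\dots\big)$ to the vector $\big(Y, \sigma^2+\sum\alpha_j^2,\dots\big)$: the Ferguson--Klass series \eqref{eq:point process Y def} converges only conditionally, so one must control the truncated sums $\sum_{|j|\le M}(\alpha_j - c_j)$ and the higher power sums $\sum \alpha_j^m$ (which converge absolutely for $m\ge2$ by $\int\min(x^2,1)\,d\nu<\infty$) uniformly, and show that the empirical moments of $q_N$ — which are sums of iid contributions $X_{j,N}^m/N$ — jointly converge to the corresponding functionals of the Poisson process. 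This is where the careful truncation argument of \cite{Ferguson-Klass1972}, combined with a standard triangular-array CLT/LLN for the row sums, does the work; once the joint vector convergence of the moments $m_1(q_N),\dots,m_\ell(q_N)$ (suitably normalized, with the $S_N$ shift handled separately as in the $\kappa_1$ term) is in hand, Lemma~\ref{lem:finite cumulant leading order} converts it into joint convergence of the cumulants by a finite recursive substitution, since each $Q_\sigma(N)/N^{j+1-|\sigma|}\to 1$.
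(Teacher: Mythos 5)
Your proposal follows essentially the same route as the paper: compute $\kappa_j^\ell(\tilde q_{\ell,N})=\ell^{j-1}N\kappa_j^N(q_N)$ via Lemma \ref{lem:cumulant-der formula}, show the normalized cumulants (equivalently, to leading order, the normalized moments $N m_j(q_N)=\sum_k X_{k,N}^j$) converge jointly to $Y$, $\sigma^2+\sum_j\alpha_j^2$, and $\sum_j\alpha_j^k$, identify these with the coefficients of $-f'(\ell s)/f(\ell s)$ by logarithmic differentiation of \eqref{eq:random f def}, and finish by continuity of the cumulant-to-coefficient bijection. Two remarks. First, the step you flag as ``the main obstacle'' is exactly the content of the paper's Lemma \ref{lem:convergence of cumulants to a point process}, and it is the technical heart of the argument: the paper proves the joint convergence via Kallenberg's triangular-array criteria for $\bigl(\sum_k X_{k,N},\sum_k X_{k,N}^2\bigr)$, convergence of the point measures $\sum_k\delta_{(X_{k,N},X_{k,N}^2)}$ to the Poisson process, Skorokhod representation, and then the Olshanski--Vershik/Assiotis--Najnudel argument (entrywise convergence plus a uniform $\ell^2$ bound gives convergence of all higher power sums). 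Your truncation sketch is workable, but note the delicate point is not absolute convergence of the limit series $\sum_j\alpha_j^m$; it is controlling the prelimit contribution of small roots, e.g.\ bounding $\sum_k |X_{k,N}|^m\1_{\{|X_{k,N}|\le\eps\}}$ by $\eps^{m-2}\sum_k X_{k,N}^2\1_{\{|X_{k,N}|\le\eps\}}$ uniformly in $N$, which is what the $\ell^2$-domination argument supplies. Second, a small slip: by Definition \ref{def: R def} the finite $R$-transform of $f\bigl(\frac{\dd}{\dd x}\bigr)x^\ell$ is $-f'(\ell s)/f(\ell s)\bmod s^\ell$, without the extra factor of $\ell$ you wrote; with your factor the termwise match would be off by $\ell$ (e.g.\ it would give $\kappa_1^\ell=\ell Y$ instead of $Y$), whereas the correct formula matches your Step 1 limits exactly.
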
 Not only does finite free probability provide an efficient proof of Theorem \ref{thm:ID limiting polynomial}, it also provides a natural characterization of the limit in terms of the L\'evy triple through \eqref{eq:id limit}.  The Bercovici--Pata bijection \cite{Bercovici-Pata1999} provides an explicit bijection between classical and free infinite divisibility, where the same L\'evy triple determines either the characteristic function or the $R$-transform. As pointed out in \cite{campbell2025freeinfinitedivisibilityfractional}, the free L\'evy-Khintchine representation of the $R$-transform then gives a connection between polynomials of the form $f\left(\frac{\dd}{\dd x}\right)x^{\ell}$ and the free infinitely divisible distribution with $R$-transform $R(s)=-\frac{f'(z)}{f(z)}$. If $f$ is defined by \eqref{eq:random f def} and we view a Poisson point process as a random discrete approximation of its intensity measure $\nu$, then $R(z)=-\frac{f'(z)}{f(z)}$ is a random meromorphic approximation of the $R$-transform determined by the L\'evy triple $(c,\sigma^2,\nu)$ and repeated differentiation provides a way to connect classical and free probability on the level of random polynomial roots.

	\begin{example}
		Some examples of polynomials satisfying the conditions of Theorem \ref{thm:ID limiting polynomial} include:\begin{enumerate}
		\item the polynomials of Proposition \ref{thm:HS}. In this case $f(z)=e^{-Zz-\frac{z^2}{2}}$.	
            
            \item $q_N=\mathcal{D}_{N^{-1/\alpha}}p_{N}$, where $p_N$ are polynomials of the form \eqref{eq:random polynomial definition} for a measure $\mu$ satisfying \begin{equation}\label{eq:domain for stable}
				\begin{aligned}
					\lim\limits_{t\rightarrow\infty}\mu(\{x\in\R:|x|\geq t\} )t^{\alpha}&=c\in(0,\infty)\\
					\lim\limits_{t\rightarrow\infty}\frac{\mu(\{x\in\R:x\geq t\} )}{\mu(\{x\in\R:|x|\geq t\} )}&=\theta\in[0,1],
				\end{aligned}
			\end{equation} for some $\alpha\in (0,2)$. We are not aware of an explicit description of the limiting polynomials beyond \eqref{eq:id limit}. However, the function $f$, which one can think of as the finite free version of the Fourier transform, does have an explicit construction. Let $E_{1},E_{2},\dots$ be i.i.d.\ standard exponential random variables,, $\eps_{1},\eps_{2},\dots$ be i.i.d.\ signed Bernoulli random variables such that $\P(\eps_1=1)=1-\P(\eps_1=-1)=\theta$, and let $\Gamma_{k}=E_1+\cdots+E_{k}$. Then, \begin{equation}
			    f(z)=e^{-Yz}\prod_{k=1}^{\infty}\left(1-\eps_{k}\Gamma_{k}^{-1/\alpha}z \right)e^{\eps_{k}\Gamma_{k}^{-1/\alpha}z}.
		\end{equation} When $\alpha=1$ and $\theta=\frac{1}{2}$, this functions has roots given by a homogeneous Poisson process, and its own behavior under repeated differentiation is the central topic of \cite{Pemantle-Subramanian2017}. Beyond this example, we are not aware of other instances of these functions appearing in the literature. 
            
		\item $q_N(x) = \prod_{j=1}^N(x-X_{j,N})=x^{N-S_N}(x-1)^{S_N}$, where $\{X_{j,N}\}_{j=1}^N$ are i.i.d. Bernoulli random variables with probability $\lambda/N$ and $S_N$ is a Binomial random variable with $\P(S_N=k)=\binom{N}{k}\left(\frac{\lambda}{N}\right)^{k}\left(1-\frac{\lambda}{N}\right)^{N-k}$. In this case $Y$ is a Poisson random variable of mean $\lambda$, $\nu=\lambda\delta_{1}$, $f(z)=(1-z)^{Y}$, and the limiting polynomial in \eqref{eq:id limit} is \begin{equation*}
		   \left(1-\frac{\dd}{\dd x}\right)^{Y}x^{\ell}=(-1)^{Y}Y!x^{\ell-Y}L_{Y}^{(\ell-Y)}(x)= \ell!(-1)^{-\ell}L_{\ell}^{(Y-\ell)}(x),
		\end{equation*} where $L_{\ell}^{(Y-\ell)}$ is the degree $\ell$ Laguerre polynomial of parameter $Y-\ell$. 
		\end{enumerate}
	\end{example}

\section{Proofs of the CLTs} The proofs of the central limit theorems of Section \ref{sec:Main results} follow from Taylor expansions, or the \emph{Delta method} as it is referred to in statistics. We shall use the term Delta method as short hand for the fact that if $\sqrt{N}\left[X_N-\theta\right]\Rightarrow\mathcal{N}(0,\sigma^{2})$ and $g$ is a twice differentiable function such that $g'(\theta)\neq 0$, then \begin{equation}
	\sqrt{N}\left[g\left(X_{N}\right)-g(\theta) \right]\Rightarrow\mathcal{N}(0,\sigma^{2}g'(\theta)^2).
\end{equation} The extension to random vectors is a straightforward generalization using multivariate Taylor expansions. 

We begin with a lemma on the fluctuations of the moments of $p_N$, which follows from straightforward computations. \begin{lemma}\label{lem:Moment CLT for p_N}
	Let $p_N$ be as in \eqref{eq:random polynomial definition}. Then, for any $\ell\in\N$ \begin{equation}
		\sqrt{N}\left[\left(m_{1}(p_N),\dots,m_{\ell}(p_N) \right)^{\mathrm{T}} -\left(m_{1}(\mu),\dots,m_{\ell}(\mu) \right)^{\mathrm{T}}\right]\Rightarrow\mathcal{N}(0,\Sigma), 
	\end{equation} where $\Sigma_{ij}=m_{i+j}(\mu)-m_i(\mu)m_{j}(\mu)$.
\end{lemma}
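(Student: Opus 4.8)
The plan is to apply the classical multivariate central limit theorem directly to the moments, since $m_j(p_N) = \frac{1}{N}\sum_{i=1}^{N} X_i^j$ is already an empirical average of iid random variables. First I would fix $\ell \in \N$ and consider the random vector $W_i := (X_i, X_i^2, \dots, X_i^\ell)^{\mathrm{T}} \in \R^\ell$; the vectors $W_1, W_2, \dots$ are iid, and since $\mu$ has all finite moments, each $W_i$ has finite mean $\mathbf{m} = (m_1(\mu), \dots, m_\ell(\mu))^{\mathrm{T}}$ and a finite covariance matrix. Note that $\left(m_1(p_N), \dots, m_\ell(p_N)\right)^{\mathrm{T}} = \frac{1}{N}\sum_{i=1}^{N} W_i$.

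Next I would simply identify the covariance matrix of $W_1$. Its $(i,j)$ entry is $\E[X^i X^j] - \E[X^i]\E[X^j] = m_{i+j}(\mu) - m_i(\mu) m_j(\mu)$, which is exactly the claimed $\Sigma_{ij}$. The existence of these entries is guaranteed because $m_{i+j}(\mu)$ is finite for $i+j \le 2\ell$, which holds under the all-finite-moments hypothesis. Then the multivariate CLT for iid random vectors in $\R^\ell$ gives
\begin{equation*}
	\sqrt{N}\left[\frac{1}{N}\sum_{i=1}^{N} W_i - \mathbf{m}\right] \Rightarrow \mathcal{N}(0, \Sigma),
\end{equation*}
which is precisely the statement of the lemma after substituting the definitions of $m_j(p_N)$ and $m_j(\mu)$.

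There is essentially no obstacle here — the result is an immediate instance of the classical multivariate CLT, and the only thing to check is the bookkeeping that matches $m_j(p_N)$ to an empirical mean and that the covariance entries are the stated ones. The one small subtlety worth a sentence is that $\Sigma$ is automatically positive semi-definite (being a genuine covariance matrix), so the limiting Gaussian is well-defined even in degenerate cases, e.g.\ if $\mu$ is a point mass; in that degenerate situation the limit is simply the constant $0$, consistent with the convention of allowing degenerate Gaussians. This is why the lemma statement does not need any non-degeneracy assumption. I would therefore present the proof in two or three lines: define $W_i$, observe $m_j(p_N)$ is their empirical average, compute $\cov(W_1)$, and invoke the CLT.
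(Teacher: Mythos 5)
Your proof is correct and is exactly the "straightforward computation" the paper alludes to: identify $m_j(p_N)$ as the empirical mean of the iid vectors $(X_i,\dots,X_i^{\ell})^{\mathrm{T}}$, compute the covariance, and apply the multivariate CLT (the all-moments hypothesis guarantees finiteness of the entries $m_{i+j}(\mu)$). No further comment is needed.
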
 

Next, we want to compare the finite free cumulants and the free cumulants of $p_N$. While an explicit comparison suitable for our purposes does not seem to have appeared in the literature, the following is essentially contained in the ideas of \cite{Arizmendi-Perales2018,Arizmendi-GarzaVargas-Perales2023}. To simplify notation we write $\kappa_{j}(p_N)=\kappa_{j}(\mu_{p_{N}})$. \begin{lemma}\label{lem:finite to full cumulant comp}
		For any $j\in\N$,\begin{equation}\label{eq:finite to full cumulant comparison}
			\kappa_{j}^{N}(p_N)=\kappa_{j}(p_N)+O\left(\frac{1}{N}\right),
		\end{equation} almost surely. 
	\end{lemma}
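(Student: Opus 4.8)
The plan is to compare the two families of cumulants by reverse-engineering the moments. Both $\{\kappa_j^N(p_N)\}_j$ and $\{\kappa_j(p_N)\}_j$ (the latter being the usual free cumulants of the empirical root measure $\mu_{p_N}$) determine, and are determined by, the moments $m_1(p_N),\dots,m_j(p_N)$ via combinatorial moment–cumulant formulas. Lemma \ref{lem:finite cumulant leading order} gives exactly such a formula for the finite free cumulants: $\kappa_j^N(p_N)$ equals $m_j(p_N)$ plus a sum over non-crossing partitions $\sigma\neq 1_j$ of terms $\frac{Q_\sigma(N)}{N^{j+1-|\sigma|}}\kappa_\sigma^N(p_N)$, where $Q_\sigma$ is monic of degree $j+1-|\sigma|$, so each such coefficient is $1+O(1/N)$. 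The classical free moment–cumulant relation is the same sum \emph{without} the rational prefactor: $m_j(p_N)=\sum_{\sigma\in\mathcal{NC}(j)}\kappa_\sigma(p_N)$, i.e.\ $\kappa_j(p_N)=m_j(p_N)-\sum_{\sigma\neq 1_j}\kappa_\sigma(p_N)$.

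First I would set up an induction on $j$. The base case $j=1$ is trivial since $\kappa_1^N(p_N)=m_1(p_N)=\kappa_1(p_N)$. For the inductive step, assume $\kappa_i^N(p_N)=\kappa_i(p_N)+O(1/N)$ for all $i<j$; note the implied constants are random but almost surely finite, because $m_i(p_N)\to m_i(\mu)$ almost surely by the strong law of large numbers, hence $m_i(p_N)$ and all the cumulants are almost surely bounded in $N$. Then subtract the two displayed recursions for index $j$:
\begin{equation*}
\kappa_j^N(p_N)-\kappa_j(p_N)=-\sum_{\substack{\sigma\in\mathcal{NC}(j)\\ \sigma\neq 1_j}}\left[\frac{Q_\sigma(N)}{N^{j+1-|\sigma|}}\kappa_\sigma^N(p_N)-\kappa_\sigma(p_N)\right].
\end{equation*}
For each fixed $\sigma\neq 1_j$ we have $j+1-|\sigma|\geq 2$ (since a proper non-crossing partition of $[j]$ has at most $j-1$ blocks... actually $\geq 1$; more carefully $|\sigma|\le j-1$ gives $j+1-|\sigma|\ge 2$, wait $\sigma=1_j$ is excluded so indeed $|\sigma|\le j-1$, hence the exponent is $\ge 2$), so $\frac{Q_\sigma(N)}{N^{j+1-|\sigma|}}=1+O(1/N)$. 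Writing $\kappa_\sigma^N(p_N)=\prod_{V\in\sigma}\kappa_{|V|}^N(p_N)$ with every block size $|V|<j$, the inductive hypothesis gives $\kappa_\sigma^N(p_N)=\kappa_\sigma(p_N)+O(1/N)$, and multiplying by $1+O(1/N)$ keeps the error at $O(1/N)$ (here using again almost sure boundedness to absorb products of errors). Summing the finitely many terms over $\sigma$ yields $\kappa_j^N(p_N)-\kappa_j(p_N)=O(1/N)$, closing the induction.

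The main obstacle I anticipate is bookkeeping the implied constants: they are random, and one must make sure they do not blow up with $N$ or with $j$ in a way that breaks the argument. The cleanest fix is to first establish, as a preliminary step, that almost surely $\sup_N |m_i(p_N)|<\infty$ for each $i\le\ell$ (immediate from the SLLN), and then observe that for fixed $\ell$ all quantities appearing in the recursion up to index $\ell$ are continuous functions of $m_1(p_N),\dots,m_\ell(p_N)$ and the rational coefficients $\frac{Q_\sigma(N)}{N^{j+1-|\sigma|}}$, all of which lie in a compact set for $N$ large; this makes every $O(1/N)$ uniform over the relevant range and legitimizes multiplying and summing the finitely many error terms. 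A secondary point worth stating explicitly is the combinatorial identity underlying Lemma \ref{lem:finite cumulant leading order}, namely that the non-crossing-partition expansion there is precisely the free moment–cumulant formula deformed by the $Q_\sigma(N)/N^{j+1-|\sigma|}$ factors; once this is granted the comparison is purely a matter of propagating the deformation through the recursion, which is exactly what the induction does.
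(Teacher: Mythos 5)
Your proposal is correct and follows essentially the same route as the paper: induction on $j$, comparing the expansion of Lemma \ref{lem:finite cumulant leading order} with the free moment--cumulant formula, with the law of large numbers providing the almost sure boundedness of moments that controls the (random) implied constants. The only slip is that you dropped the overall prefactor $\frac{N^{j}}{(N)_{j}}$ from Lemma \ref{lem:finite cumulant leading order} when subtracting the two recursions, but since this factor is $1+O(1/N)$ and $m_{j}(p_N)$ is almost surely bounded, it only contributes another $O(1/N)$ term (which the paper tracks explicitly) and does not affect the argument.
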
 \begin{proof}[Proof of Lemma \ref{lem:finite to full cumulant comp}]
		We begin by noting that the following proof holds for any sequence of polynomials whose first $j$ moments are uniformly bounded in $N$. Thus, the probabilistic ``almost sure'' conclusion follows from the moment assumptions on $\mu$ and the law of large numbers. The proof will follow by induction on $j$. For $j=1$, $\kappa_{1}^{N}(p_N)=\kappa_{1}({p_N})$ for any polynomial. 
		
		Let $j\geq 2$ and assume \eqref{eq:finite to full cumulant comparison} holds for any $1\leq k\leq j-1$. By the moment-free cumulant formula, see \cite{Mingo-Speicher2017} for example, \begin{equation}
			\kappa_{j}(p_N)=m_{j}(p_N)-\sum_{\substack{\sigma\in\mathcal{NC}(j)\\
					\sigma\neq 1_{j}}}\kappa_{\sigma}(p_N).
		\end{equation}  It then follows from Lemma \ref{lem:finite cumulant leading order} that \begin{equation}
			\begin{aligned}
				\kappa_{j}^{N}(p_N)-\kappa_{j}(p_N)&=\left(\frac{N^{j}}{(N)_{j}}-1 \right)m_{j}(p_N)\\
				&\quad+\sum_{\substack{\sigma\in\mathcal{NC}(j)\\
						\sigma\neq 1_{j}}}\kappa_{\sigma}(p_N)-\frac{N^{j}Q_{\sigma}(N)}{(N)_{j}N^{j+1-|\sigma|}}\kappa_{\sigma}^{N}(p_N)\\
				&=O\left( \frac{1}{N}\right)+\sum_{\substack{\sigma\in\mathcal{NC}(j)\\
						\sigma\neq 1_{j}}}\kappa_{\sigma}(p_N)-\left(1+O\left(\frac{1}{N} \right)\right)\kappa_{\sigma}^{N}(p_N),
			\end{aligned}
		\end{equation} almost surely. Now, for any fixed $\sigma\in\mathcal{NC}(j)$, $\sigma\neq 1_{j}$, we may use the  induction hypothesis to conclude that \begin{equation}
			\kappa_{\sigma}(p_N)-\left(1+O\left(\frac{1}{N} \right)\right)\kappa_{\sigma}^{N}(p_N)=O\left(\frac{1}{N}\right),
		\end{equation} almost surely. Summing over all such $\sigma$ completes the proof. 
	\end{proof}

Theorems \ref{thm:critical point CLT} and Theorem \ref{thm:polynomial CLT} both follow from a central limit theorem, Theorem \ref{thm:finite free cumulant fluc}, for the finite free cumulants. For any degree $\ell$ polynomial $p$, we denote by $\boldsymbol{\kappa}^{\ell}(p)=(\kappa_{1}^{\ell}(p),\dots, \kappa_{\ell}^{\ell} (p))^{\mathrm{T}}$  the vector of its degree $\ell$ finite free cumulants. Additionally, we denote by $\mathbf{m}(p)=(m_{1}(p),\dots,m_{\ell}(p))^{\mathrm{T}}$ the vector of the first $\ell$ moments of $p$. 

\begin{theorem}\label{thm:finite free cumulant fluc}
	Let $\tilde{p}_{\ell,N}$ be as in \eqref{eq:shifted der}. Then, for any $\ell\in\N$ \begin{equation}
		\sqrt{N}\left[\boldsymbol{\kappa}^{\ell}\left(\tilde{p}_{\ell,N}\right)-(0,\ell,0,\dots,0)^{\mathrm{T}}\right]\Rightarrow \mathcal{N}(0,\Sigma^{(\ell)}),
	\end{equation} where the only non-zero entry of $\Sigma^{(\ell)}$ is the $(2,2)$-entry $\ell^2(m_{4}(\mu)-1)$. 
\end{theorem}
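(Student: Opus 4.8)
The plan is to prove the finite free cumulant CLT, Theorem~\ref{thm:finite free cumulant fluc}, by transporting the moment CLT of Lemma~\ref{lem:Moment CLT for p_N} through two layers of smooth maps: first from moments of $p_N$ to the free cumulants of $p_N$ (equivalently the finite free cumulants, by Lemma~\ref{lem:finite to full cumulant comp}), and then through the explicit rescaling \eqref{eq:shifted der} together with Lemma~\ref{lem:cumulant-der formula}. Throughout we work with the centered polynomial; note that shifting roots by $\frac{1}{N}\sum_j X_j = m_1(p_N)$ makes $\kappa_1$ vanish and leaves $\kappa_j$ for $j\geq 2$ unchanged (finite free cumulants of degree $\geq 2$ are translation invariant), and the dilation $\mathcal{D}_{\sqrt N}$ multiplies $\kappa_j^N$ by $N^{j/2}$. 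Combining this with Lemma~\ref{lem:cumulant-der formula}, which contributes a factor $(\ell/N)^{j-1}$ when passing from $p_N$ to $\partial_{\ell|N}p_N$, one gets the exact identity
\begin{equation}
	\kappa_j^{\ell}\bigl(\tilde p_{\ell,N}\bigr) = \ell^{j-1} N^{1-j/2}\,\kappa_j^{N}\bigl(p_N - m_1(p_N)\bigr), \qquad 2\le j\le \ell,
\end{equation}
and $\kappa_1^{\ell}(\tilde p_{\ell,N})=0$. So the whole problem reduces to understanding the joint fluctuations of $N^{1-j/2}\kappa_j^N(p_N)$ (after centering) for $j=2,\dots,\ell$.

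Next I would record, via Lemma~\ref{lem:finite to full cumulant comp}, that $\kappa_j^N(p_N)=\kappa_j(p_N)+O(1/N)$ almost surely, so up to a negligible $O(N^{-1/2})$ error in the rescaled quantity we may replace $\kappa_j^N$ by the ordinary free cumulant $\kappa_j$ of the empirical root measure $\mu_{p_N}$; and after centering the roots, $\kappa_j(p_N)$ is a fixed polynomial in the centered moments $m_2(p_N),\dots,m_j(p_N)$ (a polynomial with no constant term and, crucially, with vanishing linear part except the coefficient of $m_j$, since the moment--free-cumulant formula gives $\kappa_j = m_j - (\text{products of lower moments})$ when $m_1=0$, and products of $\geq 2$ centered moments are quadratically small). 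This is the key structural point: $\kappa_j(p_N)$ fluctuates at scale $N^{-1/2}$ and to leading order equals $m_j(p_N) - m_j(\mu) +$ (deterministic constant coming from $m_i(\mu)$'s) $+ O_{\P}(N^{-1/2}\cdot N^{-1/2})$. For the scaling $N^{1-j/2}$ to produce a nondegenerate limit we need $j=2$: then $N^{1-j/2}=1$ and $\kappa_2^\ell(\tilde p_{\ell,N})=\ell\,\kappa_2^N(p_N-m_1(p_N))\to \ell\cdot m_2(\mu)=\ell$ with $\sqrt N$-fluctuation $\ell\sqrt N(\kappa_2(p_N)-1)+o(1)=\ell\sqrt N(m_2(p_N)-m_1(p_N)^2-1)+o(1)$; for $j\ge 3$ the prefactor $N^{1-j/2}\to 0$ kills the fluctuation, and for $j=1$ it is identically zero. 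Thus only the $(2,2)$ entry survives, and its variance is $\ell^2$ times the asymptotic variance of $\sqrt N(m_2(p_N)-m_1(p_N)^2)$.

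Finally I would compute that single variance. By Lemma~\ref{lem:Moment CLT for p_N}, $\sqrt N[(m_1(p_N),m_2(p_N))-(0,1)]$ is asymptotically Gaussian with covariance $\Sigma$ where $\Sigma_{11}=m_2(\mu)-m_1(\mu)^2=1$, $\Sigma_{12}=m_3(\mu)$, $\Sigma_{22}=m_4(\mu)-1$; applying the Delta method to $g(s,t)=t-s^2$ at $(0,1)$, whose gradient is $(-2s,1)|_{(0,1)}=(0,1)$, gives asymptotic variance $\Sigma_{22}=m_4(\mu)-1$. Hence the $(2,2)$ entry of $\Sigma^{(\ell)}$ is $\ell^2(m_4(\mu)-1)$ and all other entries vanish. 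The multivariate Delta method (applied to the smooth map sending the moment vector to the vector $(\kappa_1^\ell,\dots,\kappa_\ell^\ell)(\tilde p_{\ell,N})$, with the $O(1/N)$ and quadratic corrections absorbed into error terms) then delivers the joint convergence with the stated degenerate covariance. I expect the only real bookkeeping obstacle to be making rigorous the claim that the $O(1/N)$ discrepancy in Lemma~\ref{lem:finite to full cumulant comp} and the quadratic-in-centered-moments terms genuinely contribute $o(1)$ to the $\sqrt N$-rescaled vector (rather than merely $o(1)$ pointwise) --- this is handled by noting $\sqrt N\cdot O(1/N)=O(N^{-1/2})\to 0$ a.s.\ and $\sqrt N\cdot O_\P(1/N)\to 0$ in probability, combined with Slutsky's theorem.
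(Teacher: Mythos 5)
Your proposal follows essentially the same route as the paper's proof: the moment CLT of Lemma~\ref{lem:Moment CLT for p_N} plus the Delta method yields a CLT for the free cumulants of $p_N$, Lemma~\ref{lem:finite to full cumulant comp} transfers it to the finite free cumulants (the $O(1/N)$ discrepancy being negligible after the $\sqrt{N}$ rescaling), and the exact scaling identity $\kappa_j^{\ell}(\tilde p_{\ell,N})=\ell^{j-1}N^{1-j/2}\kappa_j^{N}(p_N)$ for $j\ge 2$ is precisely the paper's \eqref{eq:der-cumulant relation} obtained from Lemma~\ref{lem:cumulant-der formula} together with translation invariance and homogeneity of the cumulants. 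Your explicit Delta-method computation of the $(2,2)$ variance via $g(s,t)=t-s^2$ is exactly the paper's observation that $\Sigma^{\kappa}_{22}=m_4(\mu)-1$, so in substance the two arguments coincide, with your write-up only reordering the bookkeeping and spelling out more of the variance computation.

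One caveat, which you share with the paper rather than introduce: the assertion that for $j\ge 3$ the prefactor $N^{1-j/2}\to 0$ ``kills the fluctuation'' is not decisive at $j=3$. After multiplying by $\sqrt{N}$ the prefactor there is exactly $1$, and
\begin{equation*}
\sqrt{N}\,\kappa_3^{\ell}\bigl(\tilde p_{\ell,N}\bigr)=\ell^{2}\kappa_3^{N}(p_N)\longrightarrow \ell^{2}\kappa_3(\mu)=\ell^{2}m_3(\mu)
\end{equation*}
almost surely, a deterministic but in general nonzero limit. Thus the third coordinate of the rescaled vector converges to $0$ only when $m_3(\mu)=0$; for $j\ge 4$ the factor $N^{(3-j)/2}$ genuinely vanishes, so $j=3$ is the sole borderline case. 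The paper's own passage from \eqref{eq:original finite free cumulant CLT} and \eqref{eq:der-cumulant relation} to \eqref{eq:finite cumulant CLT step} makes the identical leap, so your proposal is faithful to the published argument; but an airtight version should either assume vanishing third moment, adjust the centering of the third coordinate (by $\ell^{2}m_3(\mu)N^{-1/2}$), or explain why this term is absent, since as written it does not disappear.
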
 \begin{corollary}\label{cor:moment fluct}
	Let $\tilde{p}_{\ell,N}$ be as in \eqref{eq:shifted der}. Then, for any $\ell\in\N$ \begin{equation}
		\sqrt{N}\left[\mathbf{m}\left(\tilde{p}_{\ell,N}\right)-\mathbf{m}\left(\He_{\ell}\right)\right]\Rightarrow \mathcal{N}(0,\Sigma^{(\ell),\mathbf{m}}).
	\end{equation}
\end{corollary}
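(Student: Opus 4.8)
The plan is to deduce Corollary \ref{cor:moment fluct} from Theorem \ref{thm:finite free cumulant fluc} via the multivariate Delta method, using the fact that the moments of a degree $\ell$ polynomial are smooth (in fact polynomial) functions of its first $\ell$ finite free cumulants. Concretely, combining \eqref{eq:finite cumulant def} with the standard relation between coefficients and moments (Newton's identities), there is a polynomial map $\Phi\colon \R^{\ell}\to\R^{\ell}$ with $\mathbf{m}(p)=\Phi\big(\boldsymbol{\kappa}^{\ell}(p)\big)$ for every real-rooted degree $\ell$ polynomial $p$. Since $\kappa_{j}^{\ell}(\He_{\ell})=\ell\,\delta_{j2}$, we have $\Phi\big((0,\ell,0,\dots,0)^{\mathrm T}\big)=\mathbf{m}(\He_{\ell})$. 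Theorem \ref{thm:finite free cumulant fluc} says $\sqrt{N}\big[\boldsymbol{\kappa}^{\ell}(\tilde p_{\ell,N})-(0,\ell,0,\dots,0)^{\mathrm T}\big]\Rightarrow\mathcal N(0,\Sigma^{(\ell)})$, so the Delta method gives $\sqrt{N}\big[\mathbf{m}(\tilde p_{\ell,N})-\mathbf{m}(\He_{\ell})\big]\Rightarrow\mathcal N\big(0,\, D\Phi\,\Sigma^{(\ell)}\,(D\Phi)^{\mathrm T}\big)$, where $D\Phi$ is the Jacobian of $\Phi$ evaluated at $(0,\ell,0,\dots,0)^{\mathrm T}$. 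It then remains to check $D\Phi\,\Sigma^{(\ell)}\,(D\Phi)^{\mathrm T}=\Sigma^{(\ell),\mathbf m}$.

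The key computation is thus identifying the relevant column of $D\Phi$. Because $\Sigma^{(\ell)}$ has a single nonzero entry, the $(2,2)$-entry equal to $\ell^{2}(m_{4}(\mu)-1)$, only the partial derivatives $\partial m_{i}/\partial\kappa_{2}$ at the point $\boldsymbol{\kappa}=(0,\ell,0,\dots,0)^{\mathrm T}$ matter: indeed $\big(D\Phi\,\Sigma^{(\ell)}\,(D\Phi)^{\mathrm T}\big)_{ij}=\ell^{2}(m_{4}(\mu)-1)\,\dfrac{\partial m_{i}}{\partial\kappa_{2}}\dfrac{\partial m_{j}}{\partial\kappa_{2}}$. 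So the whole corollary reduces to showing
\begin{equation}\label{eq:key derivative identity}
	\frac{\partial m_{i}(p)}{\partial\kappa_{2}^{\ell}(p)}\bigg|_{\boldsymbol{\kappa}^{\ell}(p)=(0,\ell,0,\dots,0)^{\mathrm T}}=\frac{i}{2\ell}\,m_{i}(\He_{\ell}),
\end{equation}
since then the product of the two derivatives times $\ell^{2}(m_{4}(\mu)-1)$ is exactly $\frac{ij}{4}(m_{4}(\mu)-1)m_{i}(\He_{\ell})m_{j}(\He_{\ell})=\Sigma^{(\ell),\mathbf m}_{ij}$.

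To prove \eqref{eq:key derivative identity} I would avoid manipulating Newton's identities directly and instead use the finite $R$-transform, Lemma \ref{lem:R transform and Fourier lemma}. Perturbing only the second cumulant, $\kappa_{2}^{\ell}\mapsto \kappa_{2}^{\ell}+\epsilon$ with all other cumulants fixed, corresponds at the level of $R$-transforms to $R_{p}(s)\mapsto R_{p}(s)+\epsilon s$, i.e.\ to finite free convolution $p\mapsto p\boxplus_{\ell}(x^{\ell}$ perturbed in its second cumulant$)$; concretely, differentiating the relation between the polynomial $P$ with $P(\tfrac{\dd}{\dd x})x^{\ell}=p(x)$ and the cumulants, a shift of $\kappa_{2}^\ell$ by $\epsilon$ multiplies $P(s)$ by $e^{\ell\epsilon s^{2}/2}$ up to higher order, which is a heat-flow / Hermite smoothing. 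Evaluating at the base point where $p$ has cumulant vector $(0,\ell,0,\dots,0)^{\mathrm T}$, i.e.\ $p=\He_{\ell}$, one gets $\frac{\dd}{\dd\epsilon}\big|_{0}\big(\He_{\ell}\boxplus_{\ell}(\text{2nd-cumulant }\epsilon)\big)$, and taking moments yields a linear recursion whose solution is \eqref{eq:key derivative identity}. Alternatively, and more cleanly, one can observe that scaling gives a shortcut: along the curve $\kappa_{2}^{\ell}=\ell t^{2}$ with all other cumulants zero, the polynomial is $\mathcal D_{t}\He_{\ell}=t^{\ell}\He_{\ell}(x/t)$ with $m_{i}=t^{i}m_{i}(\He_{\ell})$; since $\dfrac{\dd}{\dd\kappa_{2}^{\ell}}=\dfrac{1}{2\ell t}\dfrac{\dd}{\dd t}$ and at $t=1$ we get $\dfrac{\partial m_{i}}{\partial\kappa_{2}^{\ell}}=\dfrac{1}{2\ell}\cdot i\,m_{i}(\He_{\ell})$, which is exactly \eqref{eq:key derivative identity}. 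The main obstacle is the bookkeeping needed to justify that differentiating in the single direction $\kappa_{2}^{\ell}$ along this dilation curve indeed computes the correct partial derivative of the map $\Phi$ (one must confirm that the dilation curve is tangent to the $\kappa_{2}^{\ell}$-axis at the base point, which follows since dilation by $t$ scales $\kappa_{j}^{\ell}$ by $t^{j}$, so near $t=1$ only the $\kappa_{2}^{\ell}$ component has nonzero derivative up to the normalization), after which the identification $D\Phi\,\Sigma^{(\ell)}\,(D\Phi)^{\mathrm T}=\Sigma^{(\ell),\mathbf m}$ and hence the corollary is immediate.
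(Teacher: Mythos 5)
Your proposal is correct and follows essentially the same route as the paper: apply the multivariate Delta method to the polynomial moment--cumulant map $\Phi$ at the point $\boldsymbol{\kappa}^{\ell}(\He_{\ell})=(0,\ell,0,\dots,0)^{\mathrm T}$, using Theorem \ref{thm:finite free cumulant fluc}. Your dilation-curve computation of $\partial m_{i}/\partial\kappa_{2}^{\ell}=\tfrac{i}{2\ell}m_{i}(\He_{\ell})$ is a clean, correct way to make explicit the covariance identity $D\Phi\,\Sigma^{(\ell)}\,(D\Phi)^{\mathrm T}=\Sigma^{(\ell),\mathbf m}$ that the paper leaves implicit.
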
 

\begin{proof}[Proof of Theorem \ref{thm:finite free cumulant fluc}]
	 Lemma \ref{lem:Moment CLT for p_N} and application of the Delta method imply that \begin{equation}
		\sqrt{N}\left[\left(\kappa_{1}(p_N),\dots,\kappa_{\ell}(p_N) \right) -\left(\kappa_{1}(\mu),\dots,\kappa_{\ell}(\mu) \right)\right]\Rightarrow\mathcal{N}(0,\Sigma^{\kappa}), 
	\end{equation} for some covariance matrix $\Sigma^{\kappa}$ whose explicit structure is largely unimportant to us.  We do note that\begin{equation}\label{eq:Sigma kappa 22 entry}
		\Sigma^{\kappa}_{22}=m_{4}(\mu)-1,
	\end{equation} which follows from the fact that $\kappa_2(\mu)=\var(\mu)$.  Applying Lemma \ref{lem:finite to full cumulant comp} we see that \begin{equation}\label{eq:original finite free cumulant CLT}
		\sqrt{N}\left[\left(\kappa_{1}^{N}(p_N),\dots,\kappa_{\ell}^{N}(p_N) \right) -\left(\kappa_{1}(\mu),\dots,\kappa_{\ell}(\mu) \right)\right]\Rightarrow\mathcal{N}(0,\Sigma^{\kappa}).
	\end{equation} Note that by Lemma \ref{lem:cumulant-der formula} \begin{equation}\label{eq:der-cumulant relation}
		\kappa_{j}^{\ell}(\tilde{p}_{\ell,N})=\frac{\ell^{j-1}}{N^{\frac{j}{2}-1}}\kappa_{j}^{N}(p_N),
	\end{equation} for any $j\geq 2$, with $\kappa_{1}^{\ell}(\tilde{p}_{\ell,N})=0$. It then follows from \eqref{eq:original finite free cumulant CLT} and \eqref{eq:der-cumulant relation} that \begin{equation}\label{eq:finite cumulant CLT step}
		\sqrt{N}\left[\kappa_{j}^{\ell}(\tilde{p}_{\ell,N})-\ell\delta_{j2}  \right]\Rightarrow\mathcal{N}(0,\ell^2\Sigma^{\kappa}_{22}\delta_{j2}).
	\end{equation} The proof is then completed by combining \eqref{eq:Sigma kappa 22 entry} and \eqref{eq:finite cumulant CLT step}.
\end{proof}

\begin{proof}[Proof of Corollary \ref{cor:moment fluct}]
	Corollary \ref{cor:moment fluct} follows immediately from Theorem \ref{thm:finite free cumulant fluc}, the moment-cumulant formulas \cite[Theorem 4.2]{Arizmendi-Perales2018}, and an application of the Delta method. 
\end{proof}

\begin{proof}[Proof of Theorem \ref{thm:critical point CLT}]
	Let $f:\mathbb{W}^{\ell}\rightarrow\R^{\ell}$ be the map from the ordered roots of a polynomial to the first $\ell$ moments, which is notably an injective smooth function. It is straightforward to see that $Jf(\He_{\ell})=V$ and $Jf(\He_{\ell})^{-1}=L$, where $Jf(\He_{\ell})$ is the Jacobian of $f$ evaluated at $\He_{\ell}$. Then, from Corollary \ref{cor:moment fluct} and the Delta method we have that \begin{equation*}
		\sqrt{N}\left[\mathbf{z}\left(\tilde{p}_{\ell,N}\right)-\mathbf{z}\left(\He_{\ell}\right) \right]\Rightarrow \mathcal{N}(0,\Sigma^{(\ell),\mathbf{z}}),
	\end{equation*} completing the proof. 
\end{proof} 

\begin{proof}[Proof of Theorem \ref{thm:polynomial CLT}]
	From  Proposition \ref{thm:HS}, \eqref{eq:finite cumulant def}, and Theorem \ref{thm:finite free cumulant fluc} \begin{equation}
		\begin{aligned}
			\sqrt{N}\left[\tilde{p}_{\ell,N}(x)-\He_{\ell}(x)\right]&=\sum_{j=2}^{\ell}\sqrt{N}\frac{(\ell)_{j}}{\ell^{j}j!}x^{\ell-j}\\ &\ \ \times\left[\sum_{\pi\in\mathcal{P}(j)}(-1)^{-|\pi|}\ell^{|\pi|}\left(\prod_{V\in\pi}(|V|-1)! \right)\left(\kappa_{\pi}^{\ell}(\tilde{p}_{\ell,N})-\kappa_{\pi}^{\ell}(\He_\ell) \right) \right]\\
			&=\sum_{r=1}^{\lfloor\frac{\ell}{2}\rfloor}\sqrt{N}\left[\frac{(\ell)_{2r}}{\ell^{2r}(2r)!}(2r-1)!!(-1)^{r}\ell^{r}\left(\kappa_{2}^{\ell}(\tilde{p}_{\ell,N})^{r}-\ell^{r} \right) \right]x^{\ell-2r}+o(1)\\
			&\Rightarrow\sqrt{m_{4}(\mu)-1}Z\sum_{r=1}^{\lfloor\frac{\ell}{2}\rfloor}\frac{(-1)^{r}\ell!}{(\ell-2r)!(r-1)!}\frac{x^{\ell-2r}}{2^r}\\
			&=\sqrt{m_{4}(\mu)-1}Z\binom{\ell}{2}\He_{\ell-2}(x).
		\end{aligned}
	\end{equation} This completes the proof.
\end{proof}

\section{Proof of Theorem \ref{thm:ID limiting polynomial}} Define the polynomials $\tilde{q}_{N}(x)=\mathcal{D}_{N}q_{N}\left(x\right)$. We begin with our first lemma on the growth of the cumulants of $\tilde{q}_N$. \begin{lemma}\label{lem:convergence of cumulants to a point process}
	Let $k\in\N$. Then, in distribution  \begin{equation}
			\lim\limits_{N\rightarrow\infty} N^{1-k}\kappa_{k}^{N}(\tilde{q}_N)=\lim\limits_{N\rightarrow\infty}Nm_{k}({q}_N)=\begin{cases}
				Y,\ k=1,\\
				\sigma^2+\sum_{j\in\Z_{\neq0}}\alpha_{j}^2,\ k=2,\\
				\sum_{j\in\Z_{\neq0}}\alpha_{j}^{k},\ k>2,
			\end{cases}
	\end{equation} where  $\{\alpha_{j}\}_{j\in\Z_{\neq0}}$ is a Poisson point process with intensity measure $\nu$ and $Y$ is as defined in \eqref{eq:point process Y def}. Moreover, this convergence is joint for any finite collection of $k_1,k_2,\dots,k_{m}\in\N$.
\end{lemma}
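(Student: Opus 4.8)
\textbf{Proof proposal for Lemma \ref{lem:convergence of cumulants to a point process}.}

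The plan is to first establish the second equality, reducing a statement about finite free cumulants to a statement about moments of the empirical root measure, and then to identify the limit of $N m_k(q_N)$ using the Poisson point process representation of $Y$. For the first reduction: by Lemma \ref{lem:finite cumulant leading order}, $\kappa_k^N(p) = \frac{N^k}{(N)_k}\bigl[m_k(p) - \sum_{\sigma \neq 1_j} \frac{Q_\sigma(N)}{N^{k+1-|\sigma|}}\kappa_\sigma^N(p)\bigr]$; I would apply this to $p = \tilde q_N = \mathcal{D}_N q_N$, noting $m_k(\tilde q_N) = N^{-k} \cdot N m_k(q_N) \cdot$ (correcting powers: $m_k(\mathcal{D}_N q_N) = N^{-k} \sum X_{j,N}^k$, so $N^{k} m_k(\tilde q_N) = \sum_j X_{j,N}^k = N m_k(q_N)$ in expectation-free notation). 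An induction on $k$ then shows $N^{1-k}\kappa_k^N(\tilde q_N) = N m_k(q_N) + (\text{lower order})$: the subleading terms in the recursion involve products $\kappa_\sigma^N$ over proper non-crossing partitions $\sigma$, and since each such block contributes a factor that is $O(N^{|V|-1})$ in the relevant normalization while the combinatorial prefactor $\frac{Q_\sigma(N)}{N^{k+1-|\sigma|}}$ supplies exactly enough negative powers of $N$, the whole correction is $O(1/N)$ smaller than the leading term. This is essentially the same bookkeeping as in Lemma \ref{lem:finite to full cumulant comp}, adapted to a different scaling regime.

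Next I would compute $\lim_{N\to\infty} N m_k(q_N) = \lim_N \sum_{j=1}^N X_{j,N}^k$ in distribution. For $k \geq 2$ the idea is that the contribution of "small" roots vanishes: by Definition \ref{assump:ID assumptions} the row-sums $S_N = \sum_j X_{j,N}$ converge to the infinitely divisible $Y$, which forces the truncated second moments $\sum_j \E[X_{j,N}^2 \mathbf{1}_{|X_{j,N}| \le \delta}]$ to behave like $\sigma^2 + \int_{|x|\le\delta} x^2\, d\nu(x)$ asymptotically, so for $k \ge 3$ the small-root contribution is $O(\delta^{k-2})$ and disappears as $\delta \to 0$, while for $k=2$ it captures exactly $\sigma^2 + \int x^2 \, d\nu$. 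The "large" roots $\{X_{j,N} : |X_{j,N}| > \delta\}$ converge, as a point process on $\{|x|>\delta\}$, to the Poisson point process with intensity $\nu$ restricted to $\{|x|>\delta\}$ — this is the standard triangular-array/Lévy-Khintchine correspondence (e.g.\ the point-process formulation underlying \cite{Ferguson-Klass1972}) — and hence $\sum_{|X_{j,N}|>\delta} X_{j,N}^k \Rightarrow \sum_{|\alpha_j|>\delta} \alpha_j^k$. Letting $\delta \to 0$ and using $\int \min(x^2,1)\, d\nu < \infty$ to guarantee $\sum_j \alpha_j^k$ converges absolutely for $k \ge 2$ (and $\sum \alpha_j^2 < \infty$), I get the stated limits for $k \ge 2$. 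The case $k=1$ is simply $N m_1(q_N) = S_N \Rightarrow Y$ by hypothesis, and the identification $Y = c + \sigma Z + \sum_j (\alpha_j - c_j)$ is exactly \eqref{eq:point process Y def}.

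For the joint convergence over $k_1,\dots,k_m$: since all the limits are continuous functionals of the single pair $(Z, \{\alpha_j\})$ (with $Z$ entering only through $Y$ at $k=1$), it suffices to upgrade to joint convergence of $\bigl(S_N, \{X_{j,N}\}_{|X_{j,N}|>\delta}\bigr)$ to $\bigl(Y, \{\alpha_j\}_{|\alpha_j|>\delta}\bigr)$, which follows from the same triangular-array construction since $Y$ is built as an a.s.\ limit of (centered) partial sums over the same points; then apply the continuous mapping theorem to the vector of power sums and send $\delta \to 0$, controlling the tail uniformly by the absolute summability noted above. I expect the main obstacle to be the $\delta \to 0$ interchange of limits for $k = 2$ — one must show that the small-root second-moment contribution converges to $\int_{|x|\le\delta}x^2\,d\nu(x) + \sigma^2$ \emph{uniformly enough in $N$} that swapping $\lim_N$ and $\lim_\delta$ is legitimate; this is where the precise hypothesis that $S_N \Rightarrow Y$ with the stated Lévy triple (rather than just tightness) does the real work, via a second-moment/truncation estimate of the type appearing in standard proofs of the Lévy-Khintchine theorem for triangular arrays.
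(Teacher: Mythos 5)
Your proposal is correct in outline but takes a genuinely different route through the analytic core of the lemma. The reduction from cumulants to moments is the same as the paper's: both use Lemma \ref{lem:finite cumulant leading order} and the order count $\kappa_{\sigma}^{N}(\tilde q_N)=O(N^{k-|\sigma|})$, $|\sigma|\ge 2$, to conclude $N^{1-k}\kappa_k^N(\tilde q_N)=Nm_k(q_N)+O(1/N)$ (note one slip in your bookkeeping: $\mathcal{D}_N$ multiplies the roots by $N$, so $m_k(\tilde q_N)=N^{k-1}\sum_j X_{j,N}^k$, not $N^{-k}\sum_j X_{j,N}^k$; your target identity is nevertheless the right one). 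Where you diverge is in identifying $\lim_N \sum_j X_{j,N}^k$: you re-derive it by the classical $\delta$-truncation scheme --- large roots handled by point-process convergence on $\{|x|>\delta\}$ plus continuous mapping, small roots controlled by the truncated second-moment conditions, then $\delta\to0$ --- whereas the paper avoids truncation entirely. It first gets joint convergence of $\bigl(\sum_j X_{j,N},\sum_j X_{j,N}^2\bigr)$ and of the point measures $\sum_j\delta_{(X_{j,N},X_{j,N}^2)}$ from \cite{Kallenberg2002} and \cite{Resnick2007}, passes to an almost-sure coupling by Skorokhod, and then gets all $k>2$ at once from the Olshanski--Vershik-type lemma of \cite{Assiotis-Najnudel2021} (entrywise convergence plus an $\ell^2$ bound gives convergence of all higher power sums). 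Your route is more elementary and self-contained, but it concentrates the real difficulty exactly at the two points you only sketch: (i) the joint convergence of $\bigl(S_N,\{X_{j,N}\}_{|X_{j,N}|>\delta}\bigr)$ to $\bigl(Y,\{\alpha_j\}_{|\alpha_j|>\delta}\bigr)$ \emph{with the coupling of \eqref{eq:point process Y def}}, which is not automatic from the Ferguson--Klass representation alone and needs a triangular-array theorem giving simultaneous convergence of sums and jump point processes (this is precisely what the paper outsources to Kallenberg); and (ii) the $N$--$\delta$ interchange at $k=2$, where beyond the convergence of $N\E[X_{1,N}^2\mathbf{1}_{\{|X_{1,N}|\le\delta\}}]$ you also need a concentration estimate (e.g.\ $N\var\bigl(X_{1,N}^2\mathbf{1}_{\{|X_{1,N}|\le\delta\}}\bigr)=O(\delta^2)$) so that the small-root sum tends to the deterministic $\sigma^2$ in the iterated limit --- your phrase that it ``captures exactly $\sigma^2+\int x^2\,\dd\nu$'' is imprecise, since $\int x^2\,\dd\nu$ may be infinite and the $\int_{|x|\le\delta}x^2\,\dd\nu$ piece vanishes as $\delta\to0$. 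With those two points filled in by the standard array results you allude to, your argument closes; the paper's choice of citations is essentially a way of packaging exactly these steps.
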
 \begin{proof} It follows from Definition \ref{assump:ID assumptions} and \cite[Proposition 15.23]{Kallenberg2002}\footnote{Note that the ``only if'' direction of \cite[Proposition 15.23]{Kallenberg2002}, which we use here, does not require symmetry of the variables.} that $\sum_{j=1}^{N}X_{j,N}^{2}$ converges to an infinitely divisible random variable $W$ with log-characteristic function given by \begin{equation}
 	\log\E e^{itW}=\sigma^2it+\int_{(0,\infty)} (e^{itx}-1)\ \dd\nu_{2}(t),
 \end{equation} where $\nu_{2}$ is the push-forward of $\nu$ by $x\mapsto x^{2}$. Consider the random vectors \begin{equation}
     \xi_{k,N}=\begin{pmatrix}
         X_{k,N}\\
         X_{k,N}^{2}
     \end{pmatrix}.
 \end{equation} We can then use\cite[Corollary 15.16]{Kallenberg2002} and the convergence of the marginals of $\xi_{k,N}$ to show by direct computation that \begin{equation}\label{eq:joint convergence}
    \sum_{k=1}^{N}\xi_{k,N}\Rightarrow\begin{pmatrix}
    c+\sigma Z+\sum_{j\in\Z\setminus\{0\}}\alpha_{j}-c_{j} \\
    \sigma^2+\sum_{j\in\Z\setminus\{0\}}\alpha_{j}^{2}
    \end{pmatrix}.
 \end{equation}
 Let $\mathfrak{X}=\overline{\R}\setminus\{0\}$ equipped with the subspace topology.
 Note that from \cite[Corollary 15.16]{Kallenberg2002}, \cite[Theorem 5.3]{Resnick2007}, and \eqref{eq:joint convergence} the random point measures $\tilde\Pi_N=\sum_{j=1}^{N}\delta_{(X_{j,N},X_{j,N}^2)}$ converge to $\tilde\Pi=\sum_{j\in\Z_{\neq0}}\delta_{(\alpha_{j},\alpha_{j}^2)}$ as random Radon measures on $\mathfrak{X}\times\mathfrak{X}$. We may use Skorokhod's representation theorem to assume without loss of generality that this convergence is almost sure. It follows that there exists random permutations $\tau_N$ on $N$ elements such that \begin{equation}\label{eq:pointwise convergence}
 	\left(X_{\tau_N(1),N},X_{\tau_N(2),N},\dots,X_{\tau_N(N),N},0,0,\dots \right)\rightarrow (\alpha_{1},\alpha_{-1},\alpha_{2},\alpha_{-2},\dots)
 \end{equation} in the product topology. We will first prove the result for the moments of $q_{N}$. The cases $k=1$ and $k=2$ are contained in \eqref{eq:joint convergence}.

 We complete the proof for the convergence of the moments by noting that \eqref{eq:pointwise convergence}, the case $k=1$, and the case $k=2$ are the Olshanski--Vershik conditions of \cite{Assiotis-Najnudel2021}, and convergence of the higher moments then follows from \cite[Proposition 2.3]{Assiotis-Najnudel2021} (roughly the proof of their result is that entry-wise convergence and a uniform $\ell^2(\Z)$ bound implies convergence in $\ell^{p}(\Z)$ for $p>2$).

	From the above computation we know that $\max\left(|m_{j}(\tilde{q}_N)|,|\kappa_{j}^{N}(\tilde{q}_{N})|\right)=O(N^{j-1})$ for any $j\in\N$. We then use  \eqref{eq:leading order of cumulants}. For any $\pi\in\mathcal{P}(k)$, $\kappa_{\pi}^{N}(\tilde{q}_N)=O(N^{k-|\pi|})$, and thus to leading order $\kappa_{k}^{N}(\tilde{q}_N)$ is $\frac{N^{k}}{(N)_{k}}m_{k}(\tilde{q}_N)$. The proof then follows from the computation of the moments above. 
\end{proof}

\begin{proof}[Proof of Theorem \ref{thm:ID limiting polynomial}]
	 Let $f$ be the random entire function \eqref{eq:random f def} and let $A_{\ell}(x)=f\left(\frac{\dd}{\dd x}\right)x^{\ell}$ be the polynomials on the right-hand side of \eqref{eq:id limit}. From Definition \ref{def: R def}, $R(s)=-\frac{f'(\ell s)}{f(\ell s)} \mod s^{\ell}$ is the finite $R$-transform of $A_{\ell}$. Moreover, expanding $R(s)$ about $s=0$, we see that \begin{equation}\label{eq:id cumulants}
	\kappa_{k}^{\ell}(A_{\ell})=\begin{cases}
		Y,\ k=1,\\
		\ell\sigma^2+\ell\sum_{j\in\Z_{\neq0}}\alpha_{j}^2,\ k=2,\\
		\ell^{k-1}\sum_{j\in\Z_{\neq0}}\alpha_{j}^{k},\ k>2.
	\end{cases}
\end{equation}
    
    It follows from Lemma \ref{lem:cumulant-der formula} that \begin{equation}
		\kappa_{k}^{\ell}(\tilde{q}_{\ell,N})=\frac{\ell^{k-1}}{N^{k-1}}\kappa_{k}^{N}(\tilde{q}_N).
	\end{equation} Applying Lemma \ref{lem:convergence of cumulants to a point process} we see that \begin{equation}\label{eq:id cumulants limit}
	\boldsymbol{\kappa}^{\ell}(\tilde{q}_{\ell,N})\Rightarrow \boldsymbol{\kappa}^{\ell}(A_{\ell}).
\end{equation} The proof is then complete by noting that mapping coefficients to finite free cumulants is a continuous bijection.
\end{proof}

	\bibliography{RandomCumulants}
\bibliographystyle{abbrv}
	
\end{document}